\documentclass[11pt]{article}
\usepackage{sibarticle}
\usepackage{lmodern}
\usepackage{url}
\usepackage{nicefrac}
\usepackage{microtype}
\usepackage{amssymb,graphicx}
\usepackage{float,amsmath}
\usepackage{amsfonts,epsfig}
\usepackage[inoutnumbered,linesnumbered,algoruled,slide,vlined]{algorithm2e}
\usepackage{subcaption}
\usepackage{color}
\usepackage{xcolor}
\usepackage{hyperref}
\usepackage{cancel}
\usepackage[square,numbers]{natbib}

\renewcommand{\vec}[1]{{\mathbf{\boldsymbol{#1}}}}
\renewcommand{\d}{\mathrm{d}}
\newcommand{\R}{\mathbb{R}}
\newcommand{\E}{\mathbb{E}}

\DeclareMathOperator{\Prob}{Prob}

\DeclareMathOperator*{\argmin}{arg\,min}

\title{Generating Input Distributions for \\Explaining Portfolio Optimization Pipelines} 
\ShortTitle{Explaining Portfolio Optimization With Scenarios}
\ShortAuthors{Ataş, Aydın, Kıral, Birbil}

\NumberOfAuthors{4}

\FirstAuthor{Batuhan Ataş}
\FirstAuthorAddress{Faculty of Economics and Business,
University of Amsterdam, The Netherlands}

\SecondAuthor{Nurşen Aydın}
\SecondAuthorAddress{Warwick Business School,
University of Warwick, United Kingdom}

\ThirdAuthor{E. Mehmet Kıral}
\ThirdAuthorAddress{RIKEN AIP, Japan}

\FourthAuthor{Ş. İlker Birbil}
\FourthAuthorAddress{Faculty of Economics and Business,
University of Amsterdam, The Netherlands}

\keywords{return predictions; robust portfolio optimization; end-to-end learning; explainable optimization, MCMC}

\allowdisplaybreaks

\begin{document}


\newcommand\hlb[3][]{ \todo[inline,caption={emptytext},
  size=\normalsize, backgroundcolor=yellow!70, bordercolor=yellow!70, noshadow, #1]{
    \begin{minipage}{
        \textwidth-4pt}#2
    \end{minipage}}
  \todo{\begin{spacing}{0.5}#3\end{spacing}}}

\newcommand{\hlc}[2]{\hl{#1}\todo{\begin{spacing}{0.5}#2\end{spacing}}}

\newcommand{\sitodo}[2][]{\todo[caption={#2}, #1]{
    \begin{spacing}{0.5}#2\end{spacing}}}

\newcommand{\intodo}[2][]{\todo[inline, noshadow, caption={emptytext}, #1]{{\bf \textcolor{blue}{TO-DO:}} \newline
    \begin{spacing}{1.0}\normalsize{#2}\end{spacing}}}

\newcommand{\sib}[1]{\textcolor{violet}{#1}}
\newcommand{\tsib}[1]{\begin{tcolorbox}\textcolor{violet}{#1}\end{tcolorbox}}

\newcounter{sibcmntcounter}
\setcounter{sibcmntcounter}{1}
\long\def\symbolfootnote[#1]#2{\begingroup
  \def\thefootnote{\fnsymbol{footnote}}\footnote[#1]{#2}\endgroup}
\newcommand{\sibcmnt}[1]{{\small\textbf{
      \textcolor{violet}{(C.\arabic{sibcmntcounter})}}
    \let\thefootnote\relax\footnotetext{\textcolor{violet}
        {\scriptsize(C.\arabic{sibcmntcounter})~#1}}}
  \addtocounter{sibcmntcounter}{1}}

\newcommand{\red}[1]{\textcolor{red}{#1}}
\newcommand{\blue}[1]{\textcolor{blue}{#1}}
\newcommand{\magenta}[1]{\textcolor{magenta}{#1}}
\newcommand{\rb}[1]{\raisebox{-1.5ex}[0cm][0cm]{#1}}
\newcommand{\HRule}{\noindent\rule{\linewidth}{0.5mm}}
\newcommand{\dsum}{\displaystyle\sum}
\newcommand{\veps}{\varepsilon}

\newcommand{\CA}{\mathcal{A}}
\newcommand{\CB}{\mathcal{B}}
\newcommand{\CC}{\mathcal{C}}
\newcommand{\CD}{\mathcal{D}}
\newcommand{\CG}{\mathcal{G}}
\newcommand{\CI}{\mathcal{I}}
\newcommand{\CJ}{\mathcal{J}}
\newcommand{\CK}{\mathcal{K}}
\newcommand{\CL}{\mathcal{L}}
\newcommand{\CN}{\mathcal{N}}
\newcommand{\CP}{\mathcal{P}}
\newcommand{\CS}{\mathcal{S}}
\newcommand{\CT}{\mathcal{T}}
\newcommand{\CU}{\mathcal{U}}
\newcommand{\CX}{\mathcal{X}}
\newcommand{\YY}{\mathbb{Y}}
\newcommand{\ZZ}{\mathbb{Z}}
\newcommand{\RR}{\mathbb{R}}
\newcommand{\NN}{\mathbb{N}}
\newcommand{\II}{\mathbb{1}}

\renewcommand{\vec}[1]{{\boldsymbol{\mathbf{#1}}}}

\newcommand{\va}{\vec{a}}
\newcommand{\vb}{\vec{b}}
\newcommand{\vc}{\vec{c}}
\newcommand{\vd}{\vec{d}}
\newcommand{\ve}{\vec{e}}
\newcommand{\vf}{\vec{f}}
\newcommand{\vg}{\vec{g}}
\newcommand{\vh}{\vec{h}}
\newcommand{\vp}{\vec{p}}
\newcommand{\vr}{\vec{r}}
\newcommand{\vt}{\vec{t}}
\newcommand{\vu}{\vec{u}}
\newcommand{\vw}{\vec{w}}
\newcommand{\vx}{\vec{x}}
\newcommand{\vy}{\vec{y}}
\newcommand{\vz}{\vec{z}}
\newcommand{\zv}{\vec{0}}
\newcommand{\ov}{\vec{1}}

\newcommand{\vveps}{\vec{\veps}}
\newcommand{\veta}{\vec{\eta}}
\newcommand{\vxi}{\vec{\xi}}
\newcommand{\valpha}{\vec{\alpha}}
\newcommand{\vbeta}{\vec{\beta}}
\newcommand{\vgamma}{\vec{\gamma}}
\newcommand{\vtheta}{\vec{\theta}}
\newcommand{\vlambda}{\vec{\lambda}}
\newcommand{\vnu}{\vec{\nu}}
\newcommand{\vpi}{\vec{\pi}}
\newcommand{\vtau}{\vec{\tau}}
\newcommand{\vSigma}{\vec{\Sigma}}
\newcommand{\vOmega}{\vec{\Omega}}
\newcommand{\vTheta}{\vec{\Theta}}
\newcommand{\vmu}{\vec{\mu}}

\newcommand{\mA}{\vec{A}}
\newcommand{\mB}{\vec{B}}
\newcommand{\mC}{\vec{C}}
\newcommand{\mD}{\vec{D}}
\newcommand{\mE}{\vec{E}}
\newcommand{\mF}{\vec{F}}
\newcommand{\mG}{\vec{G}}
\newcommand{\mH}{\vec{H}}
\newcommand{\mI}{\vec{I}}
\newcommand{\mL}{\vec{L}}
\newcommand{\mM}{\vec{M}}
\newcommand{\mP}{\vec{P}}
\newcommand{\mQ}{\vec{Q}}
\newcommand{\mR}{\vec{R}}
\newcommand{\mS}{\vec{S}}
\newcommand{\mU}{\vec{U}}
\newcommand{\mV}{\vec{V}}
\newcommand{\mX}{\vec{X}}

\newcommand{\tr}{^{\intercal}}
\newcommand{\ntr}{^{-\intercal}}
\newcommand{\inv}{^{-1}}

\newcommand{\gr}{\mbox{graph}}
\newcommand{\ra}{\rightarrow}
\newcommand{\la}{\leftarrow}
\newcommand{\Ra}{\Rightarrow}
\newcommand{\rra}{\rightrightarrows}
\newcommand{\ptr}{\marginpar{$\Leftarrow$}}

\newcommand{\pfxi}{\frac{\partial f(\vx)}{\partial x_i}}
\newcommand{\pfx}{\partial f(\vx)}
\newcommand{\pf}{\partial f}
\newcommand{\pxi}{\partial x_i}
\newcommand{\px}{\partial x}

\newcommand{\nfx}{\nabla f(\vx)}
\newcommand{\eps}{\epsilon}
\newcommand{\eg}{\textit{e.g.}}
\newcommand{\ie}{\textit{i.e.}}

\newcommand{\vsp}{\vspace{4mm}}
\newcommand{\vspp}{\vspace{8mm}}
\newcommand{\vsppp}{\vspace{12mm}}

\newcommand{\hsp}{\hspace{4mm}}
\newcommand{\hspp}{\hspace{8mm}}
\newcommand{\hsppp}{\hspace{12mm}}

\newcommand{\pr}[1]{\mathbb{P}\left(#1\right)}
\newcommand{\ex}[1]{\mathbb{E}\left[#1\right]}
\newcommand{\variance}[1]{\mbox{Var}\left(#1\right)}
\newcommand{\covar}[1]{\mbox{Cov}\left(#1\right)}
\newcommand{\C}[2]{\left(\begin{array}{c} #1 \\ #2 \end{array}\right)}

\newcommand{\maximize}{\mbox{maximize\hspace{4mm} }}
\newcommand{\minimize}{\mbox{minimize\hspace{4mm} }}
\newcommand{\subto}{\mbox{subject to\hspace{4mm}}}

\newenvironment{sibitemize}{
  \renewcommand{\labelitemi}{$\diamond$}
  \begin{itemize}
    \setlength{\parskip}{0mm}}
  {\end{itemize}}

\newcommand{\propnum}[2]{\vspace{3mm}
  \noindent {\sc Proposition #1}{\it #2} \vspace{3mm}}
\newcommand{\lemnum}[2]{\vspace{3mm}
  \noindent {\sc Lemma #1}{\it #2} \vspace{3mm}}
\newcommand{\thmnum}[2]{\vspace{3mm}
  \noindent {\sc Theorem #1}{\it #2} \vspace{3mm}}

\maketitle

\begin{abstract}
We propose a \emph{predict-optimize-explain} framework that uses gradient-based sample generation to interpret various portfolio models by identifying macroeconomic conditions that induce specified portfolio outcomes. Unlike traditional feature-importance methods, this approach directly probes decision pipelines (predictive models coupled with portfolio optimization) by constructing economically meaningful what-if questions. We focus on four such questions: under what macroeconomic conditions a predict-then-optimize pipeline closes or reverses its return gap with a predict-and-optimize pipeline; what conditions lead a pipeline to diversify rather than concentrate its allocation; when a pipeline trained on calm markets overtakes one trained through crises; and what conditions would let a pipeline match a benchmark return.
These examples illustrate how our framework uncovers key behavioral differences between various decision pipelines. Beyond these cases, the proposed framework is flexible and can support a wide range of probing questions tailored to specific portfolio objectives. Our findings highlight the value of integrating prediction, optimization, and explanation to produce more robust and transparent portfolio strategies.   
\end{abstract}

\section{Introduction.}
\label{sec:Intro}



Current portfolio optimization methods, such as mean–variance optimization (MVO), risk-based approaches, robust techniques, and the more recent learning-based methods, face ongoing challenges. These include difficulties in calibrating method parameters and, more critically, dealing with estimation uncertainty, especially in data-scarce environments \citep{Markowitz52, bertsimas2018data, Costa22, Blanchet22}. Traditionally, such uncertainty is handled via a sequential \textit{predict-then-optimize} (PTO) framework. A predictive model (often trained with neural networks) first estimates unknown quantities, which are then passed to an optimization model that prescribes the best action based on the forecasts \citep{Elmachtoub2022}. However, when predictions are noisy or when the decision problem is highly sensitive to input deviations, the two-step approach can lead to significant degradation in decision quality \citep{Donti17, Cong2021, Elmachtoub2022, Mandi24}. Recent research addresses this misalignment through \textit{decision-focused learning} (DFL), also known as the \textit{predict-and-optimize} (PAO), where the predictive model is trained with the downstream optimization objective in mind. Rather than optimizing predictive models solely for statistical loss functions (e.g., mean squared error), DFL incorporates the decision objective directly into the training process  \citep{Elmachtoub2022, Donti17}. By backpropagating portfolio performance through the prediction stage, it encourages the model to ``learn to optimize,'' emphasizing signals that are most critical for the final allocation decision. While this improves alignment between prediction and optimization, it also makes portfolio pipelines harder to interpret: changes in the training objective, optimization layer, or uncertainty treatment may induce sharp and state-dependent changes in portfolio decisions.

\noindent\textbf{Motivation.}  We view modern portfolio construction as a \emph{predict–optimize–explain} (POE) pipeline. Small changes in training objectives (e.g., forecast error vs. portfolio-level targets), optimization layers (classical vs. robust), or uncertainty modeling can induce materially different allocations. Existing evaluations, based mainly on aggregate out-of-sample performance or average-case bounds, provide limited insight into \emph{when} and \emph{why} different pipelines diverge \citep{Lee25, Uysal24}. This gap is most consequential in portfolios, where macroeconomic states and small input perturbations can trigger sharp allocation shifts. Explaining such pipelines therefore requires moving beyond average performance comparisons toward a structured analysis of the input regions that induce particular decision-level behaviours.

\noindent\textbf{Our approach and contributions.}
We propose to explain decisions by generating \emph{distributions over inputs} via our POE framework. Instead of asking how a given macroeconomic state maps to portfolio weights, we invert the question: under which macroeconomic scenarios does a specified decision-level event occur (e.g., ``When does a stress-oriented policy outperform a calm-oriented one?'' or ``When does PTO close the gap to PAO?''). We cast this as a variational probing problem that guides a prior, estimated from the macroeconomic process, toward input regions that induce the target portfolio behavior. The resulting distributions provide economically plausible ``explanatory'' scenarios rather than arbitrary perturbations. This approach paves the way for our contributions:
\begin{sibitemize}
    \item \textbf{Explanation as input distributions.} We introduce a general, model-
agnostic variational framework for explaining portfolio decision pipelines through distributions over macroeconomic inputs. Rather than attributing a single portfolio decision to local perturbations around one observed state, the proposed framework identifies regions of the macroeconomic environment under which trained pipelines exhibit specified decision-level outcomes. A data-driven prior keeps the generated states close to realistic market conditions.
    \item \textbf{Theory: uniqueness and stability.} We provide a theoretical characterization of the proposed explanatory framework. For a fixed probing question and temperature parameter, the explanatory distribution is unique and admits a closed-form characterization (Proposition~\ref{prop:existence}). We also show that the framework has a Bayesian interpretation. Moreover, the resulting explanations are stable: any well-behaved probing function encoding the same economic event concentrates on the same scenario set, up to smoothing (Proposition~\ref{prop:robustness}, Corollary~\ref{cor:unique}). This ensures that differences in explanations reflect differences in the underlying economic questions being asked.
    \item \textbf{Empirical case study.} We apply the framework to an open-source cross-sectional firm--month asset-pricing panel. We compare PTO and PAO portfolio pipelines that share the same predictive network and optimization layer, differing only in their training. This controlled comparison isolates how decision-focused training changes portfolio behavior relative to a sequential prediction-based pipeline. We release our entire \href{https://github.com/sibirbil/Predict-Optimize-Explain}{code base} for reproducibility. 
    \item \textbf{Interpretability in practice.} We show how the generated explanatory distributions can be translated into interpretable economic insights. Specifically, we map the generated scenarios to economic regimes using a classifier and link them to historical analogues through nearest months in the National Financial Conditions Index (NFCI). This grounds the explanatory distributions to interpretable market regimes and historical episodes, making the results more meaningful for portfolio analysis.
\end{sibitemize}

\section{Related Literature.}
\label{sec:LitRew}
In recent years, advances in machine learning and data availability have spurred a wave of learning-based portfolio optimization methods that integrate predictive models with decision-making. Such methods are used not only to improve forecasts of asset returns \citep{GuKellyXiu2020}, but also to guide portfolio allocation decisions through PTO and PAO training strategies that integrate learning models with a Markowitz portfolio optimization layer. In PTO training strategy, estimation errors in the prediction stage are blind to their downstream consequences in the optimization stage. Indeed, it has been formally noted that good prediction performance does not always guarantee good decision performance, and formal guarantees linking the two have been lacking. \citeauthor{Nguyen22} derive conditions under which improving prediction quality translates to improved optimization outcomes \citep{Nguyen22}. Notably, they show that if a prediction model satisfies certain consistency conditions, the optimality gap of decisions in terms of prediction error can be bounded. 

End-to-end decision-focused learning, on the other hand, seeks to overcome the PTO gap by integrating the portfolio optimization step into the training of the predictive model. \citeauthor{Elmachtoub2022} introduce the seminal ``Smart Predict-then-Optimize” framework, defining a loss equal to the cost of implementing decisions based on predictions versus perfect information \citep{Elmachtoub2022}. They show that minimizing this decision error loss can lead to better out-of-sample decisions, even if it sacrifices some predictive accuracy. Following this insight, a growing body of work develops methods that differentiate through optimization problems to enable end-to-end training of portfolio models. For example, \citeauthor{Butler21} apply decision-focused learning to classical portfolio optimization and show that directly training models on the portfolio objective improved performance over decoupled predictive modeling \citep{Butler21}. They derive analytical solutions for regression models integrated in mean–variance optimization, effectively solving for parameters that minimize the realized portfolio cost. \citeauthor{Uysal24} similarly compare two end-to-end approaches for risk budgeting: a model-based network that embeds an optimization layer to allocate assets based on learned risk contributions, and a model-free network that directly outputs portfolio weights from raw features \citep{Uysal24}. Both end-to-end models outperformed a traditional two-step approach, with the model-based variant achieving higher risk-adjusted returns. \citeauthor{Anis25} extend the end-to-end paradigm to more complex decision-constrained problems, specifically a portfolio with a cardinality constraint. They embed the entire mixed-integer optimization problem into the learning process by differentiating through continuous relaxations of the discrete constraint. Their end-to-end learned factor model, trained on a portfolio volatility objective, delivered significantly lower out-of-sample portfolio risk than traditional factor estimation across various sparsity levels.

Beyond optimization-layer design, several studies emphasize the role of model architecture in aligning learning with portfolio-level objectives. \citeauthor{Zhang21} propose a universal end-to-end framework to optimize portfolio-level objectives (e.g., maximizing Sharpe ratio or a mean-variance utility) within the model’s training loss \citep{Zhang21}. They integrate Long Short-Term Memory (LSTM) networks to model temporal price patterns, ensuring the model recognizes trends and mean-reversion in asset returns. Along similar lines, \citeauthor{KisielGorse2022} introduce the Portfolio Transformer, an attention-based neural architecture trained end-to-end to directly maximize risk-adjusted returns rather than predict prices \citep{KisielGorse2022}. By formulating the Sharpe ratio as the training objective, their model learns to allocate assets in one unified step. These approaches highlight the growing role of architectural design in aligning learning with portfolio-level objectives.  Robustness considerations have also been incorporated directly into learning-based portfolio systems. \citeauthor{Costa22} propose a distributionally robust end-to-end portfolio system in which a differentiable prediction layer feeds into a distributionally-robust optimization layer, with both risk-aversion and uncertainty parameters learned from data \citep{Costa22}. By modeling an ambiguity set around the return distribution, their framework regularizes the decision and improves out-of-sample performance. \citeauthor{Wang25} introduce Gen-DFL, which learns a distribution over uncertain parameters and samples worst-case tail events during training \citep{Wang25}. This generative decision-focused approach provably improves worst-case performance bounds and yields more robust decisions under high uncertainty. \citeauthor{KimDSL2025} propose a practical portfolio optimization framework that reframes portfolio construction as a supervised learning problem \citep{KimDSL2025}. Their approach generates decision-optimal portfolio weights by solving Sharpe-ratio–based optimization problems offline and then trains deep neural networks to map market features directly to these allocations. By combining multiple networks through deep ensembles, their aim is to improve the stability and robustness of portfolio weights. Practical applications have also begun to leverage advanced architectures. \citeauthor{Hwang25} integrate large-language-model embeddings and an attention mechanism with a portfolio optimization layer, showing that minimizing traditional loss alone is suboptimal and that decision-focused training produces better portfolios \citep{Hwang25}.

Despite these advances, the decision-focused learning in financial decision-making share a common limitation, the resulting models often function as black boxes. The economic rationale behind portfolio allocations, whether driven by inflation expectations, volatility regimes, valuation ratios, or other macroeconomic forces, typically remains opaque. In portfolio management, practitioners and regulators typically demand explanations for why a model recommends a particular allocation especially during volatile periods. \citeauthor{Avramov2023} and \citeauthor{Israel2020} argue that without economic interpretability, machine learning models introduce significant model risk \citep{Avramov2023, Israel2020}. Existing interpretability methods in the relevant literature primarily operate at the prediction level. A prominent line of work focuses on model distillation, where complex black-box predictors are approximated by simpler and more transparent models, such as decision trees or rule lists \citep{Bastani2019}. While such approaches improve transparency by revealing how inputs map to predictions, they do not directly explain the behavior of downstream optimization decisions. In asset pricing, \citeauthor{GuKellyXiu2020} use variable importance measures to rank firm characteristics and identify leading drivers of cross-sectional return variation \citep{GuKellyXiu2020}. However, identifying which variables are influential on average is fundamentally different from understanding how a trained model responds to different scenarios. Related work has begun to connect interpretability with decision-making. \citeauthor{Elmachtoub20} develop decision trees trained under decision-aware loss functions, yielding interpretable feature partitions that correspond to distinct optimal decisions \citep{Elmachtoub20}. While this provides transparency in how features map to decisions, the analysis remains tied to explicit tree structures. Similarly, \citeauthor{Chen24} visualize nonlinear interactions among firm characteristics learned by deep models, but their focus remains on return prediction rather than portfolio allocation or decision behavior \citep{Chen24}. A closely related line interprets trained portfolio policies through economic distillation: \citeauthor{Cong2021} project a deep-reinforcement-learning portfolio policy onto interpretable firm-level characteristics to recover the dominant drivers of its allocations \citep{Cong2021}. Such methods explain a policy in terms of which firm characteristics it rewards. Our framework is complementary: it explains a fixed decision pipeline in the macroeconomic input space, is agnostic to the predictive architecture and optimization layer, and, because it operates at the decision level, can compare competing pipelines by the macroeconomic conditions under which their decisions coincide or diverge.

Several recent studies extend attribution-based interpretability to portfolio problem. \citeauthor{Franco25} combine a portfolio construction technique with explainable artificial intelligence (XAI) methods to identify which assets drive portfolio returns over time \citep{Franco25}. Similarly, \citeauthor{Agal25} apply XAI method to deep portfolio models to attribute the importance of various input features (e.g., technical indicators, macro signals) on the resulting allocation \citep{Agal25}. These XAI-based approaches offer useful post-hoc diagnostics but remain centered on feature attribution rather than decision-level behavior. Our contribution diverges from both attribution-based and structure-based interpretability methods. Rather than asking which inputs are important on average or approximating the model with an interpretable surrogate, we adopt a POE perspective that centers explainability at the decision level. We treat interpretation as an inverse problem on the input space. By solving a variational problem to generate distributions over inputs that induce specific portfolio behaviors, we provide a generic and decision-level tool for characterizing the economic logic of a trained portfolio model. This approach enables systematic analysis of learned allocation rules, distinguishing our framework from purely statistical or attribution-based explainability techniques.

\section{Methodology.}
\label{sec:Method}
This section formalizes our probing framework used to analyze decision pipelines for portfolio construction. The framework can be considered in two stages: training process and generation process.

We start with the first stage as illustrated in Figure \ref{fig:forwardCalc}\footnote{Boxes with dashed outlines indicate that the values inside them are optional and may or may not be used. Similarly, a dashed arrow represents an optional connection or inclusion.}. For a given time horizon, $t=1, \dots, T$ and a selection of assets, $a=1, \dots, A$, we combine the firm characteristics, $\vec{c}_{a, t}$ and the macroeconomic variables $\vec{m}_t$ into feature vectors $\vec{x}_{a,t}$ with interaction terms, see e.g., \citep{GuKellyXiu2020} where this exact input preparation is employed. A neural network with trainable parameters, $\vec{\theta}$ takes these features as input and predicts a vector of excess returns, $\widehat{\vec{r}}_{t+1}$ for the next period. This vector is then passed to a portfolio optimization layer which solves robust optimization models of the form:
\begin{equation}
\label{eqn:genopt}
\begin{array}{ll}
\maximize & \vec{w}^\top \widehat{\vec{r}}_{t+1} -\kappa \sqrt{\vec{w}^\top\Omega \vec{w}} - \tfrac{\lambda}{2}\vw^\top \vSigma \vw  \\
\subto & \vw^\top\ve = 1, \\
& \vw \geq \zv,
\end{array}
\end{equation}
where $\vSigma$ is the covariance matrix of returns, $\ve$ is the vector of ones, $\vOmega$ is a symmetric positive definite matrix defining the uncertainty set\footnote{We consider the quadratic uncertainty set in the robust portfolio optimization model. This choice is also advocated by \citeauthor{yin2021practical} who have based their reasoning to the existing literature and their experience in practice \citep{yin2021practical}.}, $\lambda \geq 0$ is the risk aversion parameter, and $\kappa \geq 0$ controls the level of uncertainty. Note that $\kappa=0$ corresponds to the celebrated MVO problem. This formulation intentionally follows \citeauthor{yin2021practical}, where the authors also discuss various practical choices of uncertainty matrices $\vOmega$, each leading to a different robust portfolio optimization problem deployed in implementations of a large European bank \citep{yin2021practical}.

\begin{figure}[http]
    \centering
    \includegraphics[width = \linewidth]{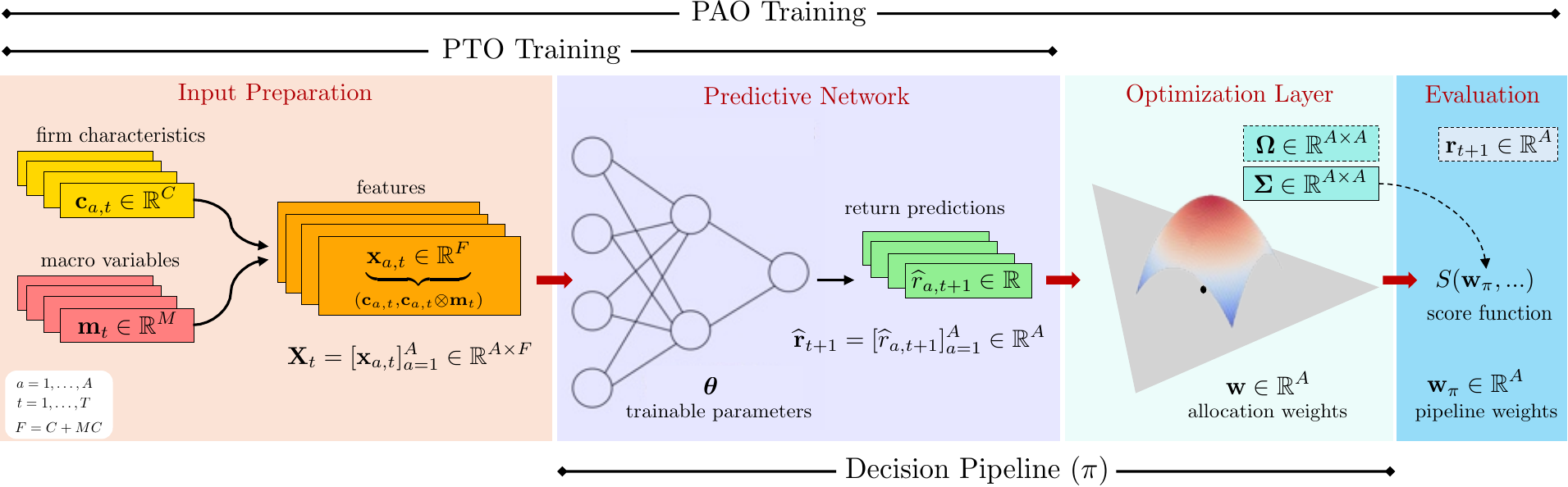}
    \caption{Training Process}
    \label{fig:training_process}
    \label{fig:forwardCalc}
\end{figure}

After obtaining the asset allocation weights from the optimization layer, portfolio-level quantities such as total return, risk-adjusted return, or the Sharpe ratio can be calculated via score function, $S$. Note that Figure \ref{fig:forwardCalc} also shows the extents of training both PTO and PAO strategies. Both strategies use the same input data and the predictive network, but differ in how the network parameters are learned. PTO trains the network using loss functions that target accurate return prediction and \emph{then} feeds the resulting predictions into the optimization layer, whereas PAO uses the optimization layer \emph{and} the score function to train the network parameters. In the subsequent sections of our paper, a decision pipeline (designated by $\pi$ in the figure) refers to this composite mapping from macroeconomic conditions and firm characteristics to portfolio allocations, independent of whether the parameters were obtained via a PTO or a PAO training strategy. The goal of our methodology is to probe such a decision pipeline at the level of its economic inputs and portfolio outputs for a given time.

This takes us to the second stage, i.e., the generation process. In Figure \ref{fig:backwardCalc}\footnote{Boxes with dashed outlines indicate that the values inside them are optional and may or may not be used. Similarly, a dashed arrow represents an optional connection or inclusion.}, we show this process in two parts marked as (a) and (b) for a given time point $\bar{t}$. At the end of stage one, we have obtained a \emph{trained} neural network. This trained network is marked in part (a) with the fixed parameter, $\vec{\theta}^*$ and it induces the decision pipeline $\pi(\vtheta^*)$. To condense the notation, we simply use $\pi$ in place of $\pi(\vtheta^*)$, note that $\pi$ also depends on the particular opptimization layer (choice of $\kappa$ and $\lambda$ in \eqref{eqn:genopt}) which we also suppress. Our framework explores the behavior of the decision pipeline, $\pi$ by generating macroeconomic scenarios, $\vec{m}$, specifically designed to shed light on targeted portfolio-related questions. At the core of this process is a user-defined probing function, $G$ which is defined on macroeconomic variables and induces a landscape over the space of macro conditions. By translating this landscape into a Gibbs distribution, our framework defines a probability distribution, $p_\pi$ that assigns higher likelihood to macro states where the probing function is low, while still preserving sufficient dispersion to explore the input space. This is illustrated in part (b) of Figure \ref{fig:backwardCalc}. Sampling from this distribution then yields macroeconomic scenarios that allow us to address questions such as: What macroeconomic conditions, given a specific cross-section of firms, lead to predicted portfolio returns or risk-adjusted returns that closely match a benchmark? Under which macroeconomic scenarios does the model produce diversified versus concentrated asset allocations? In which economic regimes do two different allocation strategies—such as a predict–then–optimize versus a predict–and–optimize approach—diverge the most in their resulting portfolio weights? These three questions are instantiated more formally in Section \ref{subsec:probingfunctions}.

\begin{figure}
    \centering
    \includegraphics[width=\linewidth]{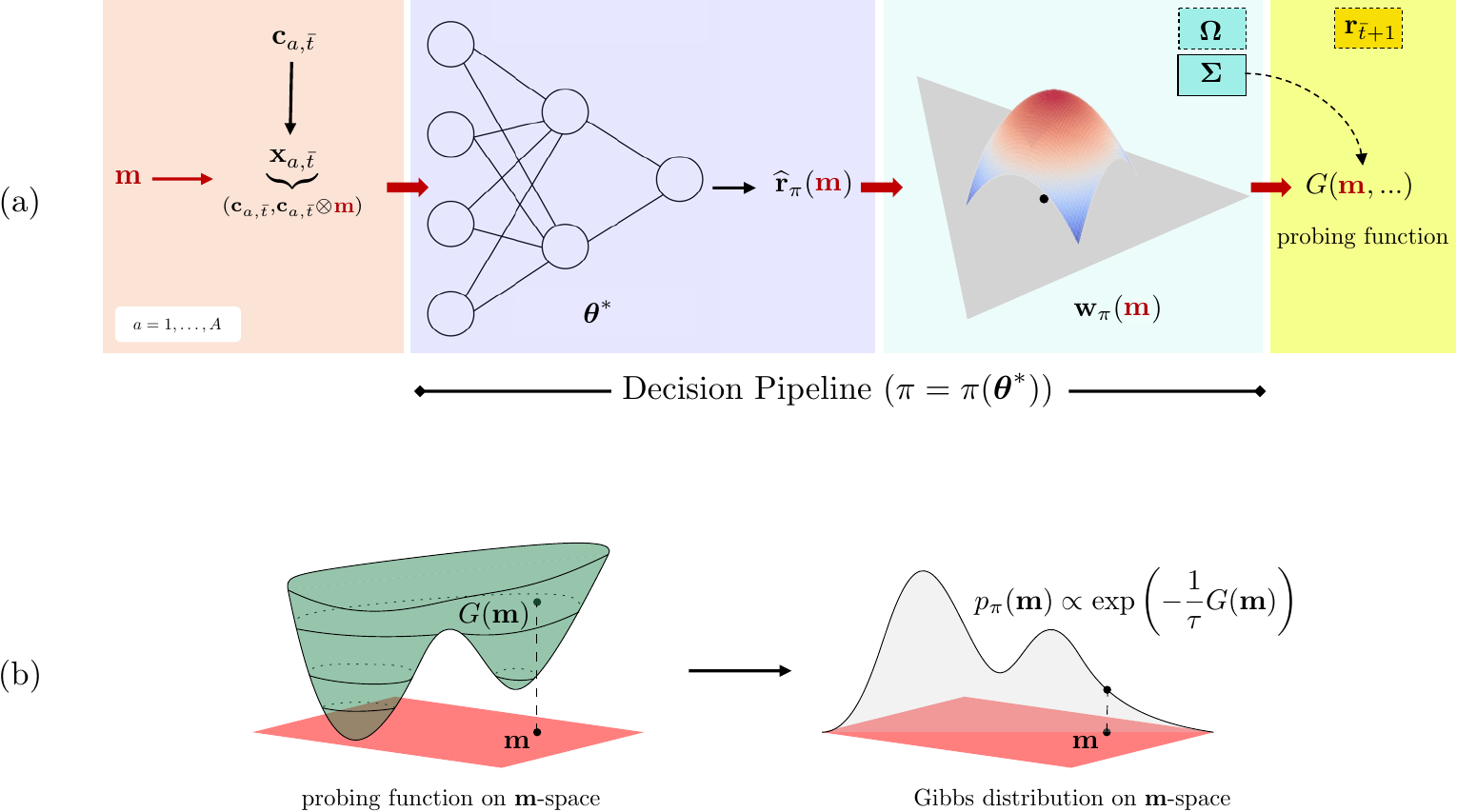}
    \caption{Generation Process}
    \label{fig:backwardCalc}
\end{figure}

The following subsections provide the details of this probing framework. We describe the underlying learning–optimization models, develop the variational and sampling machinery, and discuss the concrete portfolio setups and probing questions used in our empirical analysis. 

\subsection{Training Process.}
Machine learning comprises a broad class of methods designed to detect patterns in large-scale datasets. As a canonical example, a standard feed-forward neural network defines a parametrized family of functions $f_{\vec{\theta}}$. 
\emph{Training} such a network on a dataset $\{(\vec{X}_t, \vec{r}_{t+1})\}_{t = 1}^T$ means finding a parameter vector $\vec{\theta}^*$ such that the function becomes an approximate predictor $f_{\vec{\theta}^*}(\vec{X}_t) \approx \vec{r}_{t+1}$ on the training data, with the aim of generalizing to previously unseen observations.
The optimal parameter vector $\vec{\theta}^*$ is typically obtained via gradient-based optimization methods 
applied to the so-called \emph{empirical risk} function
\begin{equation}
\label{eq:EmpiricalRiskMinimizationF}
    \vec{\theta}^* = \argmin_{\vec{\theta}} \frac{1}{T} \sum_{t = 1}^T \ell(f_{\vec{\theta}}(\vec{X}_t), \vec{r}_{t+1}) + R(\vec{\theta}),
\end{equation}
where $\ell$ denotes a loss function that measures the discrepancy between the predicted and true labels, and $R$ represents a regularization term that effectively restricts the search space for the optimal parameter $\vec{\theta}^*$. 

When the PTO training strategy is deployed (see Figure \ref{fig:forwardCalc}) $\ell$ is taken as the mean square loss so that $f_{\vec{\theta}^*}(\vec{X}_t) \approx \vec{r}_{t + 1}$. The decision pipeline takes $\vec{X}_t$ as firm-level features at a given time $t$ and outputs its estimate of the returns for the next time. This is then followed by an optimization layer that performs portfolio asset allocation based on the predicted returns and the firms' covariances. The optimization layer can be chosen to be either a standard mean-variance optimization or robust optimization model, it can include long-only constraints or allow for both long and short positions. The input features typically consist of numerical vectors of firm characteristics, past returns over a fixed time window, or a combination thereof. We take it as the vector of interaction terms of firm characteristics and macroeconomic variables at time $t$ as in Figure \ref{fig:forwardCalc}.

The PAO training  strategy uses the same firm-level input features and predictive architecture. However, rather than training the model to minimize prediction error directly, the learning objective is defined at the portfolio level. In this setting, the model parameters $\vec{\theta}^*$ are optimized to minimize a loss function that uses a score function $S$ on the outputs $\vec{w}_\pi$ of the decision pipeline. We maximize the following score functions--or minimize their negatives: The (risk-adjusted) realized portfolio return is
\begin{equation}\label{eq:Score_RiskAdjRet}
    S(\vec{w}_\pi) = \vec{w}_\pi^\top \vec{r}_{t + 1} - \tfrac{\lambda}{2} \vec{w}^\top \Sigma \vec{w}.
\end{equation}
If $\lambda = 0$, this is simply the realized excess return of the porftolio decided by $\pi$ using the previous month's data. Alternatively, we train $\vec{\theta}^*$ so that the decision pipeline maximizes the Sharpe ratio of the portfolio,
\begin{equation}\label{eq:Score_Sharpe}
    S(\vec{w}_\pi) = \frac{\vec{w}_\pi^\top \vec{r}_{t + 1}}{\sqrt{\vec{w}_\pi^\top \Sigma \vec{w}_\pi}}.
\end{equation}
The return predictions influence the objective only indirectly, and prediction accuracy is not explicitly optimized.

\subsection{Generation Process.}
\label{subsec:generation}

We start with a decision pipeline, $\pi$ that has been obtained using one of the approaches described in training process and seek to characterize its behavior by identifying distributions over input variables $\vec{m} \in \mathcal{M}$. In our example $\mathcal{M} \subseteq \R^M$, c.f. Figure \ref{fig:forwardCalc}, we choose to restrict our input space to a box of the historical range of these variables. To this end, we define a \emph{probing function} $G$ on the data space. Rather than searching for a local minimum,
we characterize a \emph{distribution} from a loss function via the variational formulation of \eqref{eq:EmpiricalRiskMinimizationF}. 

Specifically, given a user-defined probing function $G: \mathcal{M} \to\R$ and a prior distribution $p_0$, we solve for a distribution on the input space $\mathcal{M}$ that optimizes
\begin{equation}
    \label{eq:BLPforG}
    p_{\pi, \tau} = \argmin_p \E_{p}[G] + \tau \mathrm{KL}(p\|p_0),
\end{equation}
where $\E_p[G] = \int G(\vec{m})p(\vec{m}) \d\vec{m}$ is the expected loss, and the proximal term $\mathrm{KL}(p\|p_0)$ is the Kullback-Leibler divergence between the distributions $p$ and the prior $p_0$, defined by $\operatorname{KL}(p\|p_0) = \int_{\mathcal{M}} p(\vec{m}) \log (\tfrac{p(\vec{m})}{p_0(\vec{m})}) \d \vec{m}$. We can express this in another way, as minimizing $\E_p[G - \tau\log p_0] - \tau\mathcal{H}(p)$ where $\mathcal{H}(p) = -\int p(\vec{m}) \log(p(\vec{m}))\d\vec{m}$ is the entropy. Note that minimizing \eqref{eq:BLPforG} yields a distribution over the parameter space rather than a single point estimate. In this formulation, after picking the function $\widetilde{G}(\vec{m}) := G(\vec{m}) -\tau\log p_0(\vec{m})$ and solving \eqref{eq:BLPforG}, we obtain a probability distribution $p_\pi$ that assigns higher mass on regions where $\widetilde{G}$ attains low values, while maintaining wider distribution over the parameter space. The temperature constant $\tau >0$ controls the trade-off between these competing objectives. The expected loss term is linear in $p$, while the negative entropy term is strictly convex, implying that the problem actually has a unique solution over the space of all distributions, given by the Gibbs-Boltzmann distribution
\begin{equation}\label{eq:GibbsforG}
    p_{\pi, \tau}(\vec{m}) = \frac{1}{Z} e^{-\frac{1}{\tau}\widetilde{G}(\vec{m})} = \frac{1}{Z} e^{-\frac{1}{\tau} G(\vec{m}) + \log p_0(\vec{m})},
\end{equation}
where $Z = \int e^{-\frac{1}{\tau}\widetilde{G}(\vec{m})} \d\vec{m}$ is the normalizing factor, as long as it is finite. Notice that for temperatures $\tau \in (0,\infty)$, the distribution $p_\pi$ has higher likelihood in regions where the probing function $G$ has low values. This construction yields a distribution over $\mathcal{M}$ that balances minimizing the expected value of $G$, which we are flexible to choose, and the entropy of the distribution. The resulting distribution serves as the basis for sampling input points $\vec{m}$ that reveal targeted behaviors of the trained model. The negative log-likelihood of the prior takes the role of the regularizer $R(\vec{m}):= -\log p_0(\vec{m})$. We generate scenarios that satisfy our desired criteria of being proximal to $p_0$ and also those guided by $G$, by sampling from $p_\pi$. 

\paragraph{Plausibility prior.}
The prior $p_0$ carries the economic plausibility of the generated scenarios. We construct it from the macroeconomic process itself: we fit a first-order vector autoregression to the historical macroeconomic series,
\[
\vec{m}_{t+1} = \Phi\,\vec{m}_t + \vec{c} + \vec{\varepsilon}_t,
\qquad \vec{\varepsilon}_t \sim \mathcal{N}(\vec{0}, \Sigma_\varepsilon),
\]
and take $p_0$ to be the stationary law of this process, $p_0 = \mathcal{N}(\vec{\mu}^*, \Sigma^*)$, with mean $\vec{\mu}^* = (I-\Phi)^{-1}\vec{c}$ and covariance $\Sigma^*$ solving the discrete Lyapunov equation $\Sigma^* = \Phi\,\Sigma^*\Phi^\top + \Sigma_\varepsilon$, restricted and renormalized on the macro box $\mathcal{M}$ so that the assumptions of Proposition~\ref{prop:existence} hold. This construction is anchor-independent: it encodes the long-run plausibility of macroeconomic configurations under the estimated dynamics rather than proximity to any particular date. When an explanation is sought relative to a specific anchor month $\bar{t}$, we localize the prior by multiplying the long-run plausibility of a neighbor state by a penalty for its distance from the anchor month:
\[
p_0(\vec{m}) \;\propto\;
\mathcal{N}\!\big(\vec{m};\, \vec{\mu}^*, \Sigma^*\big)\,
\exp\!\Big(-\tfrac{\rho}{2}\,(\vec{m}-\vec{m}_{\bar{t}})^\top (\Sigma^*)^{-1} (\vec{m}-\vec{m}_{\bar{t}})\Big).
\]
The product of these two Gaussian factors is itself Gaussian, with mean $(\vec{\mu}^* + \rho\,\vec{m}_{\bar{t}})/(1+\rho)$ and covariance $\Sigma^*/(1+\rho)$. The localized prior therefore centers on a weighted average of the long-run macroeconomic mean and the anchor state. The weight $\rho \geq 0$ acts as a dial between the two: $\rho = 0$ recovers the global stationary prior, larger values pull the prior toward the anchor month, and in the limit the prior concentrates at the anchor itself. We use $\rho = 1$ in our experiments, which places the prior mean halfway between the long-run mean and the anchor and halves the prior covariance. An important consequence of this construction is that proximity to the anchor is enforced by the prior alone: the probing functions $G$ of Section~\ref{subsec:probingfunctions} need no additional distance penalty and remain pure decision-level losses, consistent with the variational form~\eqref{eq:BLPforG}.

There are several methods for sampling from the Gibbs distribution, such as Metropolis Hastings Random Walk \citep{hastings1970monte}, Hamiltonian Monte Carlo \citep{neal2011mcmc}, or Picard Iterations to implement Langevin diffusion \citep{anari2024fast}. We focus on a particular Markov Chain Monte Carlo (MCMC) method that generates samples from the target distribution called the Metropolis Adjusted Langevin Algorithm (MALA). It is an MCMC method designed to sample from a target density of the form \eqref{eq:GibbsforG}. The theory of Markov chains and their limiting distributions is a well studied area of mathematics, see \citep{meyn2012markov}. The practitioner can ensure that the chain converges to a target distribution by desigining the chain's transitions. MALA uses discretized Langevin dynamics to propose new points, and the ``Metropolis Adjusted'' part of the algorithm means that these proposals are accepted or rejected based on the ratio of the target density between the current point and the proposed new point. Specifically, given the current point $\vec{m}^{(k)}$ at iteration $k$ and a step size $\eta > 0$, MALA proposes a new point according to
\begin{equation}\label{eq:MALAproposal}
    \vec{m}_{\text{prop}} = \vec{m}^{(k)} - \eta \nabla \widetilde{G}(\vec{m}^{(k)}) + \sqrt{2\tau \eta} \vec{\xi}^{(k)},
\end{equation}
where $\vec{\xi}^{(k)}\sim \mathcal{N}(\vec{0}, I)$ is a standard Gaussian noise vector and $\tau$ is the temperature coefficient. Notice that the higher the temperature, the more noise that is injected at each proposal. In the absence of the noise term, the dynamics reduce to standard gradient descent.
Each proposal is followed by a Metropolis-Hastings acceptance step. The proposed point is accepted (\textit{i.e.}, $\vec{m}^{(k+1)} = \vec{m}_\text{prop}$) or rejected (\textit{i.e.}, $\vec{m}^{(k+1)} = \vec{m}^{(k)}$) with probability
\begin{equation}\label{eq:ARratio}
    \alpha = \min \left\{1, \frac{p_{\pi, \tau}(\vec{m}_{\text{prop}}) \Gamma(\vec{m}_{\text{prop}}\to \vec{m}^{(k)} ) }{p_{\pi, \tau} (\vec{m}^{(k)})\Gamma(\vec{m}^{(k)} \to \vec{m}_{\text{prop}})}\right\},
\end{equation}
where $\Gamma(\vec{m} \to \vec{m}')$ denotes the transition probabilities of our proposals going from $\vec{m}$ to $\vec{m}'$. This step enforces the detailed balance condition, ensuring that $p_\pi$ is an invariant distribution of the Markov chain; see the classical work of \citeauthor{roberts1996exponential} for the convergence speed of this process \citep{roberts1996exponential}. The exact acceptance-rejection ratio given by \eqref{eq:ARratio} ensures the detailed balance condition, i.e. that the target distribution \eqref{eq:GibbsforG} is invariant under the updates.

If the proposal distribution were symmetric (\textit{i.e.}, $\Gamma(\vec{m} \to \vec{m}') = \Gamma(\vec{m}' \to \vec{m})$), the transition densities would cancel, and we would only compare densities of the current point and the proposal. In practice, it is sufficient to work with unnormalized densities, such as $e^{-\frac{1}{\tau}G(\vec{m})}p_0(\vec{m})$, since normalizing constants cancel in the acceptance ratio. For the proposal mechanism defined in \eqref{eq:MALAproposal}, the transition probabilities are given by
\[
	\Gamma(\vec{m} \to \vec{m}') = e^{-\frac{1}{\sqrt{8\tau \eta}}\left\|\vec{m}' - \vec{m} + \eta \nabla \widetilde{G}(\vec{m}))\right\|^2}.
\]
Since the target distribution $p_\pi$ and the proposal density are expressed in exponential form, numerical stability can be improved by computing
\[
	A = \frac{1}{\tau} ( \widetilde{G}(\vec{m}^{(k)}) - \widetilde{G}(\vec{m}_{\text{prop}}) ), 
\]
and 
\begin{equation}
B = \frac{1}{\sqrt{8\tau\eta}}
\Bigl(
\lVert \vec{m}_{\text{prop}}-\vec{m}^{(k)}+\eta\nabla \widetilde{G}(\vec{m}^{(k)})\rVert^2 
-\lVert \vec{m}^{(k)} - \vec{m}_{\text{prop}}+\eta\nabla \widetilde{G}(\vec{x}_{\text{prop}})\rVert^2
\Bigr)
\end{equation}
after we construct a proposal $\vec{x}_{\text{prop}}$.
Then, the acceptance probability can be evaluated as $\log\alpha = \min\left\{0, A + B\right\}$.
If a random uniform variable drawn from $[0,1]$ is less than $\alpha = \exp (\log\alpha)$, we accept the proposal, otherwise, we reject it.

Samples generated by an MCMC process are not independent, as the method proposes new points in the vicinity of the current point. To reduce this so-called autocorrelation, common strategies include running multiple independent MALA chains and applying thinning to retain approximately uncorrelated samples. We include some references and details on the convergence of the MALA to the target distribution in Appendix \ref{app:convergenceMALA}.

\subsection{Probing Functions.}
\label{subsec:probingfunctions}
Having obtained a trained model with optimal parameters, we seek to analyze and interpret its behavior using a probing function $G$, defined on the input space such that it attains low values whenever a prescribed property of the model is satisfied. Sampling from the corresponding Gibbs–Boltzmann distribution then allows us to explore the distribution of inputs that satisfy the desired conditions.

To illustrate, consider a setting where we compare two decision pipelines. These pipelines may differ in the optimization layer (e.g., mean–variance versus robust optimization), in the training objective (e.g., maximizing return versus Sharpe ratio), or in the datasets they were trained with.
We refer to the resulting decision pipelines as $\pi_1$ and $\pi_2$, and use $\pi$ to denote either one generically. Recall that  $\vec{w}_{\pi}(\vec{m})$ denotes the portfolio weights obtained under macroeconomic conditions $\vec{m}$, and $\vec{r}_{\bar{t} + 1}$ is the realized returns at time $\bar{t}+1$. Within this framework, we can pose a range of probing reverse questions.

\vspace{2mm}

\paragraph{Targeting a benchmark return.} Portfolio managers often work with explicit performance targets, such as benchmark excess returns, or minimum Sharpe levels. Identifying which macroeconomic conditions lead a decision pipeline to meet a given target supports mandate design and stress testing. By inverting the pipeline and searching for states that achieve a specified return or Sharpe, practitioners can assess whether performance relies on narrow regimes or remains robust across realistic scenarios, which is critical for regulation‑driven scenario analysis and client communication as well as diagnostics for the model's limitations. This leads us to the following question: 
\begin{quote}
\emph{Given a list of firm characteristics at time $\bar{t}$ and a decision pipeline $\pi$, which  market conditions $\vec{m}$ would result in portfolios whose predicted returns are close to a benchmark return $b$?}
\end{quote}
This question can be addressed using the following probing function
\begin{equation}\label{eq:benchmarkG}
    G(\vec{m}) = (\vec{w}_{\pi}(\vec{m})^\top \vec{r}_{\bar{t} + 1} - b)^2.
\end{equation}
This function is minimized when the realized portfolio's excess return is close to $b$. Note from Figure \ref{fig:backwardCalc} that the allocation vector $\vec{w}_\pi$ depends on the firm characteristics of the chosen assets at time $\bar{t}$. This formulation provides insight into the conditions required for the decision pipeline, $\pi$ to attain the desired level of return. To incorporate risk adjustment, the term $\vec{w}_\pi(\vec{m})^\top \vec{r}_{\bar{t} + 1}$ can be replaced by $\vec{w}_\pi(\vec{m})^\top \vec{r}_{\bar{t} + 1} - \tfrac{\lambda}{2}\vec{w}_\pi(\vec{m})^\top \vec{\Sigma} \vec{w}_\pi(\vec{m})$ where $\vec{\Sigma}$ is also obtained at time $\bar{t}$. Here we note that the covariance matrix of excess returns $\vec{\Sigma}$ at a time $\bar{t}$ is estimated using an exponentially weighted moving average as in \cite[p.31]{boyd2024markowitz}. 

\vspace{2mm}

\paragraph{Encouraging diverse portfolios.} Portfolio managers face hard diversification and concentration limits, while naive mean–variance optimization often produces unstable, highly concentrated portfolios. Probing for macro conditions that lead to diversified versus concentrated allocations can reveal the conditions under which the model predicts similar returns for assets with similar idiosyncratic risk, providing macro-level insights into how the predictive model drives allocations. It can also reveal a model's rigidness and preference for concentrated portfolios. Then, the question becomes:
\begin{quote}
\emph{Given a list of firm characteristics at time $\bar{t}$ and a decision pipeline $\pi$, which market conditions $\vec{m}$ would result in well-diversified portfolios, rather than ones concentrated in only a few assets?}
\end{quote}
The degree of diversification of a portfolio allocation can be quantified using entropy. Specifically, the entropy of an allocation weight vector $\vec{w}$ is defined as $H(\vec{w}) = - \sum_i w_i \log w_i$, which attains its lowest value when all mass is concentrated in a single asset, \textit{i.e.}, $w_{i_0} = 1$ and $w_i = 0$ for all $i \neq i_0$, and its maximum when weights are evenly distributed across assets. Accordingly, defining the probing function
\begin{equation} \label{eq:probeEntropy}
        G(\vec{m}) = -H(\vec{w}_\pi(\vec{m}))
\end{equation}
favors macroeconomic conditions that induce more diversified portfolio allocations for the decision pipeline $\pi$. 

\vspace{2mm}

\paragraph{Contrastive scenarios for decision pipelines.} Two competing models (\emph{e.g.}, PTO vs PAO) or two decision pipelines trained on different datasets will deliver differing portfolio allocations $\vec{w}_{\pi_1}(\vec{m}_{\overline{t}})$ and $\vec{w}_{\pi_2}(\vec{m}_{\overline{t}})$, which will result in distinct realized returns. Say $\pi_1$ results in a  higher excess return than $\pi_2$. To understand how the pipelines depend on the macro conditions we ask: 
\begin{quote}
    \emph{What market conditions would flip the excess returns of the two pipelines $\pi_1$ and $\pi_2$?}
\end{quote}
For this purpose, we construct the probing function (assuming $\pi_2$ has a higher return than $\pi_1$)
\[
    G_{\pi_1,\pi_2}(\vec{m}) = - (\vec{w}_{\pi_1}(\vec{m})^\top \vec{r}_{\overline{t} + 1}- \vec{w}_{\pi_2}(\vec{m})^\top \vec{r}_{\overline{t} + 1}),
\]
where $\vec{r}_{\overline{t} + 1}$ are the actual realized excess returns. This takes lower values when $\pi_1$ achieves a higher return, flipping the actual scenario. As all the firm characteristics are the same, we observe the effect of macroeconomic conditions that changes the outcomes.

As demonstrated, the framework offers significant flexibility in defining probing functions. While there are numerous ways to investigate how the models respond to macroeconomic variables, we focus on these three probing functions in our computational study.

\subsection{Theoretical Foundations of Probing.}
\label{subsec:theory}

We formalize our probing framework by viewing it as a Bayesian update over macroeconomic states, and then deriving its variational and robustness properties. Starting from a prior distribution over macro regimes, a probing question is encoded as a decision-based loss $G(\vec{m})$ that plays the role of a pseudo negative log-likelihood, yielding a Gibbs distribution $p(\vec{m})$ over macro states. We show that this distribution is equivalently the unique minimizer of an entropy-regularized expected-loss problem and that, for a fixed decision-level event, different smooth probing losses lead to Gibbs measures that concentrate on the same set of macro conditions. This perspective clarifies both the well-posedness of our construction and the sense in which our explanations are robust to the choice of probing function.

We denote by $\mathcal{M}$ the space of macro states and by $\mathcal{Y}$ the space of decision-level quantities of interest (for example, realized portfolio returns, Sharpe ratios, or vectors collecting several such performance measures). We fix a decision pipeline $\pi$, which maps macro states $\vec{m}\in\mathcal{M}$ to portfolio weights $\vec{w}_\pi(\vec{m})$. Given $\pi$, we write $Y:\mathcal{M}\to\mathcal{Y}$
for a continuous mapping that evaluates a decision-level quantity of interest (such as the realized excess return or a Sharpe-like score of pipeline $\pi$) at macro state $\vec{m}$. The probing loss $G(\vec{m})$ is defined in terms of $Y(\vec{m})$ (and hence in terms of $\pi$), and our theoretical results treat $\pi$ and $Y$ as given and study the resulting distribution $p(\vec{m})$ over macro scenarios.

\paragraph{Bayesian interpretation.} 

Let $p_0$ be a reference probability density on $\mathcal{M}$ with respect to a base measure $\mu$, which we interpret as a prior distribution over macroeconomic conditions. For example, given a date of interest $\bar{t}$, $p_0(\vec{m})$ can be taken as the probability of observing a macroeconomic condition $\vec{m}$ in a month that follows $\vec{m}_{\bar{t}}$, which we estimate e.g.\ by fitting a autoregressive model to the month to month variations of historical data.

A probing question is encoded by a nonnegative function $G:\mathcal{M}\to\mathbb{R}_+$ that penalizes macro states $\vec{m}$ where the decision pipeline $\pi$ fails to exhibit a desired behavior. Given $p_0$, $G$, and a temperature parameter $\tau>0$, we define the probing distribution as the solution of \eqref{eq:BLPforG}
where the minimization is over probability densities $p$ on $\mathcal{M}$ (with respect to $\mu$), and $\mathrm{KL}(p\|p_0)$ denotes the Kullback--Leibler divergence between $p$ and $p_0$. The objective \eqref{eq:BLPforG} balances two terms: (i) A data-like term $\mathbb{E}_p[G(m)]$ that encourages concentration on macro states with low probing loss. (ii) A complexity penalty $\mathrm{KL}(p\|p_0)$ that keeps $p$ close to the prior $p_0$. The scalar $\tau>0$ plays the role of an inverse sample size or noise level: smaller $\tau$ places more emphasis on minimizing the probing loss, while larger $\tau$ keeps the distribution closer to $p_0$.

Given the prior $p_0$ and interpreting $\frac{1}{\tau} G(\vec{m})$ as the negative log-likelihood of observing $\vec{m}$ in hypothetical scenario then the Gibbs solution to \eqref{eq:BLPforG} is the Bayes posterior, following the Bayes rule $\Prob(\vec{m} | \texttt{scenario}
) \propto \Prob(\texttt{scenario} | \vec{m} ) p_0(\vec{m})$. In this interpretation, choosing $G$ amounts to specifying a probabilistic model for our hypothetical scenario. For example, in the benchmark targeting probing scenario we probe the model by finding $\vec{m}$ which is distributed probabilistically under the assumption that the portfolio generated by the pipeline $\pi$ has an excess return which is normally distributed $\mathcal{N}(b, 2\tau)$ centered at the benchmark return $b$. The likelihood we set for the diversity encouraging scenario is less standard than a Gaussian, and is proportional to the perplexity $e^{H(\vec{w}_\pi(\vec{m}))}$ of the porfolio assignment, with portfolios with higher perplexity (higher diversification) being assigned a higher likelihood. For the last probing function where we contrast two models by flipping the returns they would give for that month, our choice of probing function is equivalent to selecting the likelihood of this scenario as log-linear on the returns, given $\vec{m}$. 


\begin{proposition}[Existence, form, and uniqueness]
    \label{prop:existence}
Assume that $\mathcal{M}$ is compact, $G$ is continuous and bounded from below, $p_0$ is a strictly positive and continuous prior density on $\mathcal{M}$ with respect to $\mu$, and the reference measure $\mu$ has full support on $\mathcal{M}$. Then, problem \eqref{eq:BLPforG} admits a unique minimizer $p_{\pi, \tau}$, which has the Gibbs density
\begin{equation}
\label{eq:gibbs}
p_{\pi,\tau}(\vec{m}) = \frac{\exp\big(-G(\vec{m})/\tau\big) p_0(\vec{m})}{\int_{\mathcal{M}}\exp\big(-G(\vec{u})/\tau\big)p_0(\vec{u})d\mu(\vec{u})}.
\end{equation}
Equivalently, $p_{\pi, \tau}(\vec{m}) \propto \exp(-\widetilde{G}(\vec{m})/\tau)$. In particular, for each fixed probing function $G$, prior $p_0$, and temperature $\tau>0$, there is a unique probing distribution $p_{\pi,\tau}$ associated with the corresponding question.
\end{proposition}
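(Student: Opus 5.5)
The plan is to use the Donsker--Varadhan (Gibbs variational) identity, which makes existence, the explicit form, and uniqueness fall out of a single computation.

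\textbf{Step 1: the normalizer is finite and positive.} Since $\mathcal{M}$ is compact and $G$, $p_0$ are continuous, $G$ is bounded on $\mathcal{M}$, say $\underline{G}\le G\le \overline{G}$. Also $p_0>0$ is continuous, hence bounded below by some $c>0$. Therefore
\[
Z := \int_{\mathcal{M}} e^{-G/\tau}p_0\,d\mu \;\in\; \Big[e^{-\overline{G}/\tau},\, e^{-\underline{G}/\tau}\Big],
\]
using $\int p_0\,d\mu=1$. So $Z$ is finite and strictly positive, and $p_{\pi,\tau}$ in \eqref{eq:gibbs} is a well-defined probability density that is strictly positive. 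The full-support assumption on $\mu$ and positivity of $p_0$ ensure we never divide by zero.

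\textbf{Step 2: rewrite the objective as a KL divergence.} We restrict to admissible densities $p$ with $\mathrm{KL}(p\|p_0)<\infty$; otherwise the objective is $+\infty$, and $\mathbb{E}_p[G]$ is always finite because $G$ is bounded. For such $p$, we have $\log p_0 = \log p_{\pi,\tau} + G/\tau + \log Z$ pointwise. Substituting into the KL term gives
\[
\mathbb{E}_p[G] + \tau\,\mathrm{KL}(p\|p_0) \;=\; \tau\,\mathrm{KL}(p\|p_{\pi,\tau}) \;-\; \tau\log Z .
\]
Each term is finite, so the splitting of integrals is legitimate; boundedness of $G$ is what justifies separating them.

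\textbf{Step 3: conclude.} By Gibbs' inequality, $\mathrm{KL}(p\|p_{\pi,\tau})\ge 0$, with equality if and only if $p=p_{\pi,\tau}$ $\mu$-a.e. This is strict convexity of $x\log x$, or equivalently Jensen's inequality applied to $\log$. Hence the minimum value is $-\tau\log Z$, attained exactly at $p_{\pi,\tau}$. This proves existence and uniqueness up to $\mu$-null sets. The statement $p_{\pi,\tau}\propto\exp(-\widetilde G/\tau)$ follows directly from $\widetilde G = G-\tau\log p_0$.

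\textbf{Main obstacle.} The only delicate point is measure-theoretic bookkeeping: handling $p$ that are not absolutely continuous with respect to $p_0$, and making sure the infinite-KL case is excluded cleanly. I would settle this by the convention that $\mathrm{KL}=+\infty$ in that case, and by noting that $p_{\pi,\tau}$ and $p_0$ are mutually absolutely continuous. The latter holds because their ratio $e^{-G/\tau}/Z$ is bounded above and below, so $\mathrm{KL}(p\|p_0)<\infty$ exactly when $\mathrm{KL}(p\|p_{\pi,\tau})<\infty$.

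Compactness is used only to obtain these bounds. Uniqueness is to be understood as equality $\mu$-a.e. Continuity then gives a canonical continuous representative, which is unique pointwise because $\mu$ has full support.
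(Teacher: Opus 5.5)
Your argument is correct, and it takes a genuinely different route from the paper's. The paper proves the three claims separately:
\begin{itemize}
\item existence by the direct method, using weak continuity of $\mathbb{E}_p[G]$ together with weak lower semicontinuity and compact sublevel sets of $\mathrm{KL}(\cdot\|p_0)$;
\item uniqueness by strict convexity of $p\mapsto\int p\log p\,d\mu$;
\item the Gibbs form by a \emph{formal} Lagrangian first-order condition $\widetilde{G}+\tau(1+\log p)+\lambda=0$.
\end{itemize}
That first-order condition tacitly assumes the minimizer is strictly positive and that one may differentiate pointwise over the space of densities.

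Your identity $\mathbb{E}_p[G]+\tau\,\mathrm{KL}(p\|p_0)=\tau\,\mathrm{KL}(p\|p_{\pi,\tau})-\tau\log Z$ delivers all three conclusions at once. Gibbs' inequality replaces both the weak-compactness machinery and the variational calculus. This buys you three things:
\begin{itemize}
\item it makes rigorous the identification step that the paper carries out only heuristically;
\item it yields the optimal value $-\tau\log Z$ as a by-product;
\item it shows that compactness matters only through the boundedness of $G$ and the finiteness and positivity of $Z$. In fact your Step 1 uses only $\int p_0\,d\mu=1$, not the lower bound on $p_0$.
\end{itemize}

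What the paper's template buys in return is robustness of method. Direct method plus strict convexity would still give existence and uniqueness if the KL term were replaced by another strictly convex divergence with no comparable closed-form identity. Your proof relies on the specific exponential-tilting structure of the KL problem.
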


\begin{proof}
We first note that 
\[
\mathrm{KL}(p\|p_0) = \int_\mathcal{M} p(\vu)\log \frac{p(\vu)}{p_0(\vu)} d\mu(\vu) = \int_\mathcal{M} p(\vu)\log p(\vu) d\mu(\vu) - \int_\mathcal{M} p(\vu)\log p_0(\vu) d\mu(\vu).
\]
Thus, the objective in \eqref{eq:BLPforG} becomes
\[
g(p) = \int_\mathcal{M} G(\vec{m}) p(\vec{m}) d\mu(\vec{m}) + \tau \int_\mathcal{M} p(\vec{m})\log p(\vec{m}) d\mu(\vec{m}) - \tau\int_{\mathcal{M}}p(\vec{m})\log p_0(\vec{m})d\mu(\vec{m}),
\]
which can be written as
\[
g(p) = \int_\mathcal{M} \widetilde{G}(\vec{m}) p(\vec{m}) d\mu(\vec{m}) + \tau \int_\mathcal{M} p(\vec{m})\log p(\vec{m}) d\mu(\vec{m}),
\]
where $\widetilde{G}(\vec{m}) = G(\vec{m}) -\tau\log p_0(\vec{m})$. 
Since $\mathcal{M}$ is compact, $G$ is continuous, and $p_0$ is strictly positive and continuous, $\widetilde{G}$ is continuous and bounded below. Existence follows because $\mathbb{E}_p[G]$ is weakly continuous (as $G$ is continuous and bounded on the compact $\mathcal{M}$) and $\mathrm{KL}(\cdot|p_0)$ is weakly lower-semicontinuous with compact sublevel sets in the weak topology on probability measures. Uniqueness follows because $g(p)$ is strictly convex in $p$: the term $\mathbb{E}_p[G]$ is linear in $p$, while $p \mapsto \tau\int p\log p d\mu$ is strictly convex.

We next derive the optimal solution. Let $\lambda$ be the multiplier for the normalization constraint and consider the Lagrangian function
\[
L(p, \lambda) = \int_\mathcal{M} \widetilde{G}(\vec{m}) p(\vec{m}) d\mu(\vec{m}) + \tau \int_\mathcal{M} p(\vec{m})\log p(\vec{m}) d\mu(\vec{m}) + \lambda \left(\int_{\mathcal{M}} p(\vec{m})d\mu(\vec{m}) - 1\right).
\]
Formally differentiating $L$ in the direction of a perturbation $\delta p$ with $\int\delta p\,d\mu=0$ yields the first-order condition
\[
\widetilde{G}(\vec{m}) + \tau(1+\log p(\vec{m})) + \lambda = 0
\]
Thus, for some constant $C>0$, we have
\[
p(\vec{m}) = C \exp\left(-\widetilde{G}(\vec{m})/\tau\right) = C \exp\left(-G(\vec{m})/\tau\right)p_0(\vec{m})
\]
Enforcing now $\int_{\mathcal{M}} p(\vec{m})\,d\mu(\vec{m})=1$ identifies
\[
C^{-1}=\int_{\mathcal{M}}\exp(-G(\vec{u})/\tau)p_0(\vec{u}) d\mu(\vec{u}),
\]
which gives \eqref{eq:gibbs}.
\end{proof}

\paragraph{Robustness of the scenarios.} 
The discussion above allows us to state precisely which aspects are unique and which are intentionally question-dependent. At a fixed probing question, specified by a probing function $G$, prior $p_0$, and temperature $\tau$, the associated Gibbs density $p_{\pi, \tau}$ in \eqref{eq:gibbs} is unique. 
Thus, there is no ambiguity at the distribution level; different sampling schemes aim at approximating the same target. A more subtle issue is whether different probing functions that encode the \emph{same} decision event yield consistent scenario sets. To this end, we first connect the Gibbs density to conditioning on decision-level events and show how this perspective leads naturally to an event-level robustness result.

Let $Y:\mathcal{M}\to\mathcal{Y}$ be a continuous mapping that extracts a decision-level quantity of interest (e.g., realized portfolio return or a Sharpe-like score of pipeline $\pi$) from a macro state $\vec{m}$. Let $\mathcal{A}\subseteq\mathcal{Y}$ be a closed target set encoding the probing question, and define the corresponding \emph{solution set}
\begin{equation}    
\label{eq:setS}
\mathcal{S} = \{\vec{m}\in\mathcal{M}: Y(\vec{m})\in\mathcal{A}\}.
\end{equation}
In an idealized setting, we would like to work with the conditional prior $p_0(\cdot \mid Y(\vec{m})\in\mathcal{A})$, but for continuous macro variables and sharp events $\{Y(\vec{m})\in\mathcal{A}\}$, this conditional distribution is often ill-defined in the usual measure-theoretic sense. Therefore, we obtain a smooth approximation by defining probing as the distance to $\mathcal{A}$. The following proposition formalizes our approach.

\begin{proposition}[Approximate Conditioning]
\label{prop:robustness}
Let $\mathcal{M}$ be compact, $Y:\mathcal{M}\to\mathcal{Y}$ be continuous, and $\mathcal{A}\subseteq\mathcal{Y}$ be closed with nonempty preimage $\mathcal{S}$ given in \eqref{eq:setS}, and $p_0$ be a strictly positive and continuous prior density on $\mathcal{M}$. Consider any probing function $G:\mathcal{M}\to\mathbb{R}_+$ satisfying the following conditions:
\begin{enumerate}
    \item $G(\vec{m})\ge 0$ for all $\vec{m}\in\mathcal{M}$ and $G(\vec{m})=0$ if and only if $\vec{m}\in\mathcal{S}$,
    \item $G$ is continuous on $\mathcal{M}$.
\end{enumerate}
For $\tau>0$, let $p_\tau(\vec{m})$ be of the form \eqref{eq:gibbs}. Then, for every open neighborhood $U$ of $\mathcal{S}$ in $\mathcal{M}$, we have 
\[
\lim_{\tau\downarrow 0} \int_U p_\tau(\vec{m})\,d\mu(\vec{m}) = 1.
\]
In particular, if we take the canonical choice 
\[
G(\vec{m}) = \mathrm{dist}(Y(\vec{m}),\mathcal{A})^2, 
\]
where $\mathrm{dist}(y,\mathcal{A}) = \inf_{a\in\mathcal{A}}\|y-a\|$, then the corresponding Gibbs densities concentrate on $\mathcal{S}$ as $\tau\to 0$ and $p_\tau$ converges weakly to the conditional prior $p_0(\cdot\mid Y(\vec{m})\in\mathcal{A})$.
\end{proposition}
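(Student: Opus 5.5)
The plan is a Laplace-type concentration argument. It uses only compactness of $\mathcal{M}$, continuity of $G$, and positivity and continuity of $p_0$. I will also use the full-support assumption on $\mu$ from Proposition~\ref{prop:existence}; without it a neighborhood of $\mathcal{S}$ could carry no $\mu$-mass, so I regard that assumption as implicitly in force.

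Fix an open neighborhood $U$ of $\mathcal{S}$ and set $K=\mathcal{M}\setminus U$. The set $K$ is closed in the compact $\mathcal{M}$, hence compact. If $K$ is empty there is nothing to prove. Otherwise, since $G$ is continuous and vanishes only on $\mathcal{S}\subseteq U$, we have $G>0$ on $K$, and $G$ attains its minimum there. This gives some $\delta>0$ with $G\ge\delta$ on $K$. The numerator of the mass of $K$ is then bounded by
\[
\int_K e^{-G/\tau}p_0\,d\mu \le e^{-\delta/\tau}\int_{\mathcal{M}} p_0\,d\mu = e^{-\delta/\tau}.
\]

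For the normalizing constant I need a matching lower bound. Let $V=\{\vec{m}\in\mathcal{M}: G(\vec{m})<\delta/2\}$. This set is open by continuity of $G$ and nonempty because it contains $\mathcal{S}\neq\emptyset$. Full support of $\mu$ gives $\mu(V)>0$. Since $p_0$ is positive and continuous on a compact set, $c:=\min_{\mathcal{M}} p_0>0$. Therefore
\[
Z_\tau=\int_{\mathcal{M}} e^{-G/\tau}p_0\,d\mu \ge c\,\mu(V)\,e^{-\delta/(2\tau)}.
\]
Combining the two bounds yields $\int_K p_\tau\,d\mu \le e^{-\delta/(2\tau)}/(c\,\mu(V))\to 0$ as $\tau\downarrow 0$, which is the claimed limit $\int_U p_\tau\,d\mu\to 1$.

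For the canonical choice $G(\vec{m})=\mathrm{dist}(Y(\vec{m}),\mathcal{A})^2$, I only need to check the two hypotheses. The map $y\mapsto\mathrm{dist}(y,\mathcal{A})$ is $1$-Lipschitz and $Y$ is continuous, so $G$ is continuous and nonnegative. Because $\mathcal{A}$ is closed, $\mathrm{dist}(y,\mathcal{A})=0$ if and only if $y\in\mathcal{A}$, so $G=0$ exactly on $\mathcal{S}$. Concentration on $\mathcal{S}$ then follows from the first part.

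For the weak convergence I would split into two cases according to whether $\mu(\mathcal{S})$ is positive.

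If $\mu(\mathcal{S})>0$, the conditional prior is well defined. For any bounded continuous test function $f$, $e^{-G/\tau}\to\mathbf{1}_{\mathcal{S}}$ pointwise and is bounded by $1$. Dominated convergence then gives $\int f e^{-G/\tau}p_0\,d\mu\to\int_{\mathcal{S}} f p_0\,d\mu$, and likewise $Z_\tau\to\int_{\mathcal{S}}p_0\,d\mu>0$. Their ratio is exactly the expectation of $f$ under $p_0(\cdot\mid Y\in\mathcal{A})$, which proves the weak convergence.

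If $\mu(\mathcal{S})=0$, the hard case, the conditional prior is not defined classically, as the paper itself remarks. Here I would argue as follows. By Prokhorov's theorem on the compact $\mathcal{M}$, weak limit points of $(p_\tau)$ exist. By the concentration result and the portmanteau theorem, any such limit is supported on $\mathcal{S}$. I would then interpret ``the conditional prior'' as this limit object.

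The main obstacle is identifying that limit uniquely in the case $\mu(\mathcal{S})=0$. It depends on how fast $G$ grows off $\mathcal{S}$ and on the local geometry of $\mathcal{S}$, for example through a coarea or disintegration argument when $Y$ is smooth with full-rank derivative. I would state the convergence claim rigorously only under $\mu(\mathcal{S})>0$, and in the degenerate case assert only concentration on $\mathcal{S}$.
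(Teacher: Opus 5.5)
Your proof is essentially the paper's: the same uniform bound $G\ge\delta$ on the compact set $\mathcal{M}\setminus U$, a sublevel set of $G$ with positive $\mu$-measure, and $\min p_0>0$ to compare $e^{-\delta/\tau}$ against $e^{-r/\tau}$ with $r<\delta$. The paper also needs full support of $\mu$ at the sublevel-set step but uses it only implicitly; your choice $r=\delta/2$ and your use of $\int p_0\,d\mu=1$ are cosmetic differences. For the weak-convergence claim, the paper offers only the pointwise limit $e^{-G/\tau}\to\mathbf{1}_{\mathcal{S}}$, which your dominated-convergence argument makes rigorous exactly when $\mu(\mathcal{S})>0$ and which is inconclusive when $\mu(\mathcal{S})=0$ (for Lebesgue $\mu$, smooth scalar $Y$, $\mathcal{A}=\{0\}$ and $\nabla Y\neq 0$ on $\mathcal{S}$, the canonical $G=Y^2$ gives a limit with surface density proportional to $p_0/|\nabla Y|$, not $p_0$), so restricting that part to the non-degenerate case is a justified caveat rather than a gap.
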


\begin{proof}
Let $U$ be an open neighborhood of $\mathcal{S}$. Then $\mathcal{M}\setminus U$ is compact and disjoint from $\mathcal{S}$. On $\mathcal{M}\setminus U$, we have $G(\vec{m})>0$ by the assumption that the zero set of $G$ is exactly $\mathcal{S}$. By continuity of $G$ and compactness, there exists $\delta>0$ such that
\[
G(\vec{m}) \ge \delta \text{ for all } \vec{m}\in\mathcal{M}\setminus U.
\]
On the other hand, $G(\vec{m})=0$ on $\mathcal{S}$, so by continuity for all $r > 0$ there exists a measurable set $V \subset U$ with $\mu(V)>0$ such that
\[
G(\vec{m}) \le r \text{ for all } \vec{m}\in V.
\]
Since $p_0$ is strictly positive and continuous on the compact set $\mathcal{M}$, there exist constants $0 < c \le C < \infty$ such that $c \le p_0(\vec{m}) \le C$ for all $\vec{m}\in\mathcal{M}$. Using these bounds, we obtain 
\[
\int_{\mathcal{M}} \exp(-G(\vec{u})/\tau)p_0(\vec{u}) d\mu(\vec{u}) \ge \int_{V} \exp(-G(\vec{u})/\tau)p_0(\vec{u}) d\mu(\vec{u}) \ge c\exp(-r/\tau)\mu(V),
\]
and 
\[
\int_{\mathcal{M}\setminus U} \exp(-G(\vec{u})/\tau) p_0(\vec{u}) d\mu(\vec{u}) \le C\exp(-\delta/\tau) \mu(\mathcal{M}\setminus U).
\]
Therefore, 
\[
\int_{\mathcal{M}\setminus U} p_\tau(\vec{m}) d\mu(\vec{m}) = \frac{\int_{\mathcal{M}\setminus U} \exp(-G(\vec{u})/\tau) p_0(\vec{u}) d\mu(\vec{u})}{\int_{\mathcal{M}} \exp(-G(\vec{u})/\tau) p_0(\vec{u})d\mu(\vec{u})} \le \frac{C\mu(\mathcal{M}\setminus U)}{c \mu(V)}\exp(-(\delta-r)/\tau),
\]
which converges to zero as $\tau\downarrow 0$ whenever $r < \delta$. Hence $\int_U p_\tau(\vec{m})d\mu(\vec{m})\to 1$ as $\tau\downarrow 0$.

For the conditional prior statement: as $\tau\downarrow 0$, the factor $\exp(-G(\vec{m})/\tau)$ converges to $\vec{1}_{\mathcal{S}}(\vec{m})$ point-wise, so $p_\tau d\mu$ places all its mass on $\mathcal{S}$ with weights proportional to $p_0$, which is precisely $p_0(\cdot\mid Y(\vec{m})\in\mathcal{A})$.
\end{proof}

\begin{corollary}[Event-level Uniqueness]
\label{cor:unique}
Under the assumptions of Proposition \eqref{prop:robustness}, consider two probing functions that satisfy the two conditions of the proposition and have the same zero set $\mathcal{S}$. Then, both families of Gibbs densities concentrate on the same set $\mathcal{S}$ as $\tau\to 0$.
\end{corollary}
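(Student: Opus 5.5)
The plan is to apply Proposition~\ref{prop:robustness} to each probing function separately and then compare the two conclusions. Let $G_1$ and $G_2$ be the two probing functions. Both are continuous and nonnegative on the compact set $\mathcal{M}$, and both vanish exactly on the common set $\mathcal{S}$ from \eqref{eq:setS}. Let $p^{(1)}_\tau$ and $p^{(2)}_\tau$ be the corresponding Gibbs densities of the form \eqref{eq:gibbs}, built with the same prior $p_0$.

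The main step is to check that each $G_i$ satisfies the hypotheses of Proposition~\ref{prop:robustness}. This is immediate: condition~1 is the zero-set assumption, and condition~2 is continuity. The ambient hypotheses on $\mathcal{M}$, $Y$, $\mathcal{A}$ and $p_0$ are inherited verbatim. The proposition then gives, for every open neighborhood $U$ of $\mathcal{S}$,
\[
\lim_{\tau\downarrow 0}\int_U p^{(i)}_\tau(\vec{m})\,d\mu(\vec{m}) = 1, \qquad i=1,2 .
\]
Since the quantifier ``every open neighborhood $U$ of $\mathcal{S}$'' refers only to $\mathcal{S}$, and $\mathcal{S}$ is the same set for both functions, the two families concentrate on the same set. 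This is exactly the claim.

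I would also make the meaning of ``concentrate on the same set'' explicit, so that the statement is not vacuous. If $K\subseteq\mathcal{M}$ is closed with $K\cap\mathcal{S}=\emptyset$, then $U=\mathcal{M}\setminus K$ is an open neighborhood of $\mathcal{S}$. Hence $p^{(i)}_\tau(K)\to 0$ for both $i$. Consequently any weak limit point of either family, which exists by compactness of $\mathcal{M}$ (Prokhorov), is supported in $\mathcal{S}$, and the two families agree on where their asymptotic mass can sit. One can further note that for any open $U\supseteq\mathcal{S}$ the difference $|p^{(1)}_\tau(U)-p^{(2)}_\tau(U)|$ tends to $0$.

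I do not expect a real obstacle here; the corollary is essentially a restatement. The only point needing care is scope. The corollary should claim equality of the concentration sets only, not equality of the limiting distributions within $\mathcal{S}$. Those limits may depend on how fast each $G_i$ vanishes near $\mathcal{S}$ when $\mathcal{S}$ has $\mu$-measure zero. I would therefore add a remark that the weak-limit identification with $p_0(\cdot\mid \mathcal{S})$ is claimed only when $\mu(\mathcal{S})>0$. In that case $e^{-G_i/\tau}\to\mathbf{1}_{\mathcal{S}}$ pointwise, and dominated convergence applies to both families identically.
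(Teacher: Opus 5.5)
Your proposal is correct and follows the paper's own (implicit) argument: the paper gives no separate proof, since the corollary comes from applying Proposition~\ref{prop:robustness} to each probing function and noting that its conclusion depends only on the common zero set $\mathcal{S}$. Your added caveat is also sound: identifying the weak limit with $p_0(\cdot\mid\mathcal{S})$ is justified only when $\mu(\mathcal{S})>0$, which sharpens the stronger claim the paper makes in the proposition.
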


This combined approximate conditioning and event-level uniqueness results show that robustness should be evaluated at the level of the \emph{conditioning event}. For a fixed event $\{Y(\vec{m})\in\mathcal{A}\}$, any two well-behaved probing functions that vanish exactly on the same solution set $\mathcal{S}$ induce Gibbs densities whose supports coincide asymptotically as $\tau\to 0$. In other words, within a given probing question (fixed event and prior), the \emph{scenario set} highlighted by probing is unique up to smoothing. Non-uniqueness arises only when one changes the event itself, that is, when $\mathcal{A}$ changes; for example, from ``target return'' to ``target Sharpe'', or from ``high diversification'' to ``concentrated allocations''. Such changes correspond to asking substantively different economic questions, and it is appropriate that they yield different explanatory macro regimes.

We note that the event-level uniqueness result in Corollary \ref{cor:unique} is an asymptotic statement: it characterizes how Gibbs densities behave as $\tau\to 0$ and shows that, in this limit, different probing functions for the same event concentrate on the same solution set $\mathcal{S}$. In applications, however, we do not drive $\tau$ to zero. Instead, $\tau$ is treated as a smoothing and regularization parameter which may require tuning. In practice, we recommend choosing $\tau$ to balance the trade-off between concentrating on low-loss regions and maintaining a sufficiently broad distribution for sampling. The robustness result implies that, as long as $\tau$ is not too large, different probing functions for the same event should yield qualitatively similar scenario sets, even if they differ in their smoothing properties.

Taken together, the Bayesian formulation, the variational characterization, and the event-level uniqueness result show that probing is a well-posed and interpretable construction: for each fixed decision pipeline $\pi$, each probing question defines a unique Gibbs distribution $p_\tau(\vec{m})$ over macro states that can be viewed as an entropy-regularized solution. The support of this solution is robust to the choice of probing loss at the event level, and its sharpness around the event can be tuned through the temperature parameter $\tau$. The practitioner can tune $\tau$ to be large enough so that the generated data points conform (up to an acceptable degree of error) to the desired scenario for majority of the cases. For example, if we apply the benchmark return probing function \eqref{eq:benchmarkG}, then one can decrease the temperature parameter $\tau$ until the generated scenarios produce returns that lie within a reasonable variance around $b$. The limits of what counts of ``reasonable'' changes according to the quantity under constraint and the kind of behavior we want to extract out of the model.

\section{Computational Study.}
\label{sec:NumEx}
This section presents an empirical evaluation of the PTO and PAO training strategies, together with the proposed probing framework. We first describe the dataset construction and experimental design. We then compare out-of-sample portfolio performance across the two training strategies under several allocation configurations. Finally, we use the probing framework to generate macroeconomic scenarios that answer the decision-level questions introduced in Section~\ref{subsec:probingfunctions}, showing how the framework can diagnose and compare the behavior of trained portfolio decision pipelines.

\subsection{Data and Experimental Design.}
\label{subsec:design}
Our empirical analysis is based on a large-scale monthly panel of firm-level and macroeconomic data, which we use to train and evaluate the PTO and PAO training strategies. We construct a monthly firm--month panel dataset by combining firm-level characteristics with realized stock returns, following standard empirical asset pricing practices \citep{GuKellyXiu2020}.

\paragraph{Firm characteristics.} We obtain monthly firm-level predictors from the open-source cross-sectional asset pricing dataset compiled by \citet{ChenZimmermann2022}, which contains a large set of economically interpretable firm characteristics. Following standard data quality procedures, we exclude predictors with more than 70\% missing values over the full sample, resulting in a final set of $C = 140$ characteristics. The remaining predictors are preprocessed on a month-by-month basis. Missing values are imputed using the cross-sectional median, with a fallback of zero for characteristics that are entirely missing in a given month. We winsorize each characteristic at the 1st and 99th percentiles to limit the influence of extreme observations, and then apply cross-sectional rank normalization to map values into the interval $[-1,1]$. This transformation represents each firm by its relative position in the monthly cross-section and is commonly used in machine-learning-based return prediction to improve numerical stability \citep{GuKellyXiu2020}.

\paragraph{Macroeconomic variables.} We obtain macroeconomic conditioning information from the Goyal--Welch monthly predictor dataset \citep{GoyalWelch2008,GoyalWelchData2024}, a widely used benchmark in return forecasting. From this source, we construct $M=9$ macroeconomic predictors: dividend--price ratio (\texttt{dp}), earnings--price ratio (\texttt{ep}), book-to-market ratio (\texttt{bm}), net equity expansion (\texttt{ntis}), Treasury bill rate (\texttt{tbl}), term spread (\texttt{tms}), default spread (\texttt{dfy}), stock market variance (\texttt{svar}), and inflation rate (\texttt{infl}). Definitions of these variables are provided in Appendix~\ref{app:macro_preds}.
 
\paragraph{Feature construction.} Following Section~\ref{sec:Method}, we construct firm--macro interaction features by combining the $C$ firm-level characteristics $\mathbf{c}_{a,t}$ with their interactions with the $M$ macroeconomic variables $\mathbf{m}_t$, as illustrated in Figure \ref{fig:training_process}. This gives a total of $F = (M + 1) \times C = 1{,}400$ features per firm--month observation. The resulting interaction structure enables the predictive model to capture how the cross-sectional relationship between firm characteristics and expected returns varies with macroeconomic conditions \citep{GuKellyXiu2020}.

\paragraph{Returns and label cleaning.} We define the prediction target as the one-month-ahead realized excess return, computed as the raw return minus the risk-free rate. Following the convention established in Section~\ref{sec:Method}, we denote excess returns simply as $r_{a,t+1}$ throughout; hence, $\mathbf{r}_{t+1} \in \mathbb{R}^A$ collects the cross-section of excess returns at time $t+1$. Observations with missing excess returns are removed from the sample. To stabilize training and mitigate the influence of extreme values, we cap large negative excess returns at $-99\%$ and winsorize the upper tail using a high-quantile cutoff.

\paragraph{Sample period and split.}
Our empirical analysis spans the period from January 1980 through November 2024, comprising $T = 539$ months. To preserve temporal integrity and avoid look-ahead bias, we adopt a strictly chronological split into training, validation, and test sets, following established practice in empirical asset pricing studies \citep{GuKellyXiu2020,Chen24}:
\begin{itemize}
    \item \textit{Training set}: January 1980 -- December 2005 (312 months),
    \item \textit{Validation set}: January 2006 -- December 2015 (120 months),
    \item \textit{Test set}: January 2016 -- November 2024 (107 months).
\end{itemize}
Table~\ref{tab:data_summary} provides descriptive statistics for each subset, including the number of firm--month observations, average monthly cross-sectional coverage, and firm-level excess-return moments in each sample period.

\begin{table}[htbp]
\centering
\caption{Dataset summary statistics.}
\label{tab:data_summary}
\begin{tabular}{lcccc}
\toprule
& \textbf{Train} & \textbf{Validation} & \textbf{Test} & \textbf{Full sample} \\
\midrule
Period & 1980--2005 & 2006--2015 & 2016--2024 & 1980--2024 \\
Months & 312 & 120 & 107 & 539 \\
Firm--months & 2{,}204{,}243 & 815{,}813 & 880{,}204 & 3{,}900{,}260 \\
Avg.\ firms/month & 7{,}065 & 6{,}798 & 8{,}226 & 7{,}236 \\
\midrule
\multicolumn{5}{l}{\textit{Excess return statistics (firm--month, monthly)}} \\
Mean & 0.41\% & 0.52\% & 0.89\% & 0.54\% \\
Std.\ dev. & 16.4\% & 14.2\% & 15.8\% & 15.3\% \\
\bottomrule
\end{tabular}
\end{table}

\paragraph{Predictive architecture.}
Both the PTO and PAO training strategies rely on the same feedforward neural network predictor $f_{\boldsymbol{\theta}}:\mathbb{R}^{F}\to\mathbb{R}$ that maps firm--macro interaction features to one-month-ahead predicted excess returns. We use a compact multilayer perceptron with three hidden layers of sizes $(32,16,8)$, ReLU activations, batch normalization, and dropout regularization with rate $0.5$. This design is consistent with standard nonlinear prediction architectures used in empirical asset pricing \citep{GuKellyXiu2020, Chen24}.

\paragraph{Portfolio allocation (common across PTO and PAO).}
At each decision month $\bar{t}$, the predictor outputs one-month-ahead return forecasts $\widehat{r}_{a,t+1}$ for all firms $a = 1, \ldots, A$ observed at time $t$. We work with a curated universe of $500$ firms, constructed by anchoring on the constituents of the S\&P~100 and adding large-capitalization firms to ensure broad industry coverage. This provides a liquid and economically representative cross-section over the sample. At each decision month, we retain firms with a complete trailing return history of 60 months, which is required for risk estimation. 

\paragraph{Risk estimation and allocation configurations.}
Portfolio risk is estimated using an exponentially weighted moving average (EWMA) covariance matrix computed from realized excess returns over the past $60$ months, with a decay factor of $0.94$, consistent with standard RiskMetrics-style smoothing \citep{Lau2015NIGVaR}. To improve numerical stability, we apply diagonal shrinkage with intensity $0.10$ and add a ridge stabilization term $10^{-6}$, following best practices in covariance regularization \citep{LedoitWolf2004}.

Given the predicted returns $\widehat{\mathbf{r}}_{t+1}$ and the estimated covariance matrix $\boldsymbol{\Sigma}$, portfolio weights are obtained by solving the robust mean--variance allocation problem~(1) introduced in Section~\ref{sec:Method}. We evaluate both the baseline mean--variance allocation ($\kappa=0$) and mean-uncertainty-penalized variants ($\kappa>0$). In our experiments, we consider two uncertainty scalings, $\boldsymbol{\Omega}=\mathrm{diag}(\boldsymbol{\Sigma})$ and $\boldsymbol{\Omega}=\mathbf{I}$, following the practical recommendations of \citet{yin2021practical}.

\subsection{Training Process: Model Construction.}
\label{subsec:TrainingResults}
In this section, we evaluate the PTO and PAO training strategies using the same decision pipeline components: universe construction, risk estimation, allocation problem, constraints, and monthly rebalancing rules as described above. The two strategies differ only in how the neural network parameters $\boldsymbol{\theta}$ are optimized (see Figure~\ref{fig:training_process} in Section~\ref{sec:Method}). We next summarize the key implementation details for each training strategy.

\paragraph{Implementation details.}
In the PTO training strategy, the predictor is trained for pointwise accuracy by minimizing mean squared error between predicted and realized excess returns on the training set, as described in Section~\ref{sec:Method}. After training, the predictor parameters $\boldsymbol{\theta}^*$ are held fixed. Out of sample, each month we predict returns for all firms, estimate risk via EWMA covariance, and solve the allocation problem for the chosen $(\lambda,\kappa,\boldsymbol{\Omega})$ configuration.

In the PAO training strategy, the predictor is trained jointly with the downstream allocation rule so that model parameters are optimized for portfolio performance rather than pointwise forecast accuracy \citep{Donti17,Elmachtoub2022}. Training proceeds by backpropagating through a differentiable convex optimization layer implemented via \texttt{CVXPY} and \texttt{cvxpylayers} \citep{DiamondBoyd2016,Agrawal2019}. Within this decision-focused training strategy, we consider three portfolio-level training objectives corresponding to the score function $S(\mathbf{w}_\pi, \ldots)$ in Figure~\ref{fig:training_process}: maximizing realized portfolio return, maximizing mean--variance utility, and maximizing the annualized Sharpe ratio. Model selection is conducted on the validation period using the corresponding portfolio criterion, and final evaluation is performed strictly out of sample on the test period.

To explore the interaction between learning objectives and allocation preferences, we evaluate a grid of hyperparameters. Risk aversion is set to $\lambda \in \{5,10,20\}$, the mean-uncertainty penalty to $\kappa \in \{0.0,0.1,0.5,1.0,10.0\}$, and uncertainty scaling to $\boldsymbol{\Omega} \in \{\mathrm{diag}(\boldsymbol{\Sigma}), \mathbf{I}\}$. For PAO, we additionally vary the training loss across three objectives: return, utility, and Sharpe ratio. Full experimental details are reported in Appendix~\ref{app:training_details} (Table~\ref{tab:experimental_setup}).

\paragraph{Performance results.} We evaluate out-of-sample (OOS) performance using annualized mean excess return, annualized volatility, and annualized Sharpe ratio (scaled by $\sqrt{12}$) over the test period January~2016--November~2024, comprising 107 months.  Detailed summary statistics and full parameter sweeps are reported in Appendix~\ref{app:training_details}. Here, we focus on time-series diagnostics that reveal when and how performance differences arise. As a baseline, we include an equal-weighted portfolio formed on the same monthly universe.

We begin by evaluating the PTO training strategy, in which the return predictor is trained independently of the allocation rule and subsequently deployed within a robust mean--variance portfolio optimizer. Monthly cross-sectional forecasts of one-month-ahead excess returns are generated using the feedforward network described in Section~\ref{subsec:design}. These forecasts are then translated into portfolio weights $\mathbf{w}_\pi$ via long-only robust mean--variance optimization with full-investment constraints. In terms of predictive accuracy, the FNN model achieves an out-of-sample $R^2$ of approximately $0.35\%$ for excess return forecasts relative to a zero benchmark on the test period. This magnitude is consistent with the modest but economically meaningful predictability typically documented in the empirical asset-pricing literature \citep{GuKellyXiu2020, Chen24}. In PAO training, by contrast, the network parameters $\boldsymbol{\theta}$ are learned jointly through the allocation layer, while holding the data, universe formation, risk estimation, and feasibility constraints fixed. 

Figures~\ref{fig:agg_wealth} and \ref{fig:agg_roll_sharpe} illustrate the dynamic performance of representative pipelines. The selected configurations include the equal-weight benchmark on the same investible universe, a strong PTO configuration (robust mean–variance with $\boldsymbol{\Omega} = \mathrm{diag}(\boldsymbol{\Sigma})$ and moderate $\kappa$), and two PAO pipelines trained end-to-end using return and Sharpe-ratio objectives, respectively. Figure~\ref{fig:agg_wealth} compares cumulative wealth paths. Both PTO and PAO generate substantial long-run compounding gains relative to the equal-weight benchmark, with separation emerging over multi-year horizons rather than from a small number of isolated months. 

\begin{figure}[h]
\centering
\includegraphics[width=0.95\linewidth]{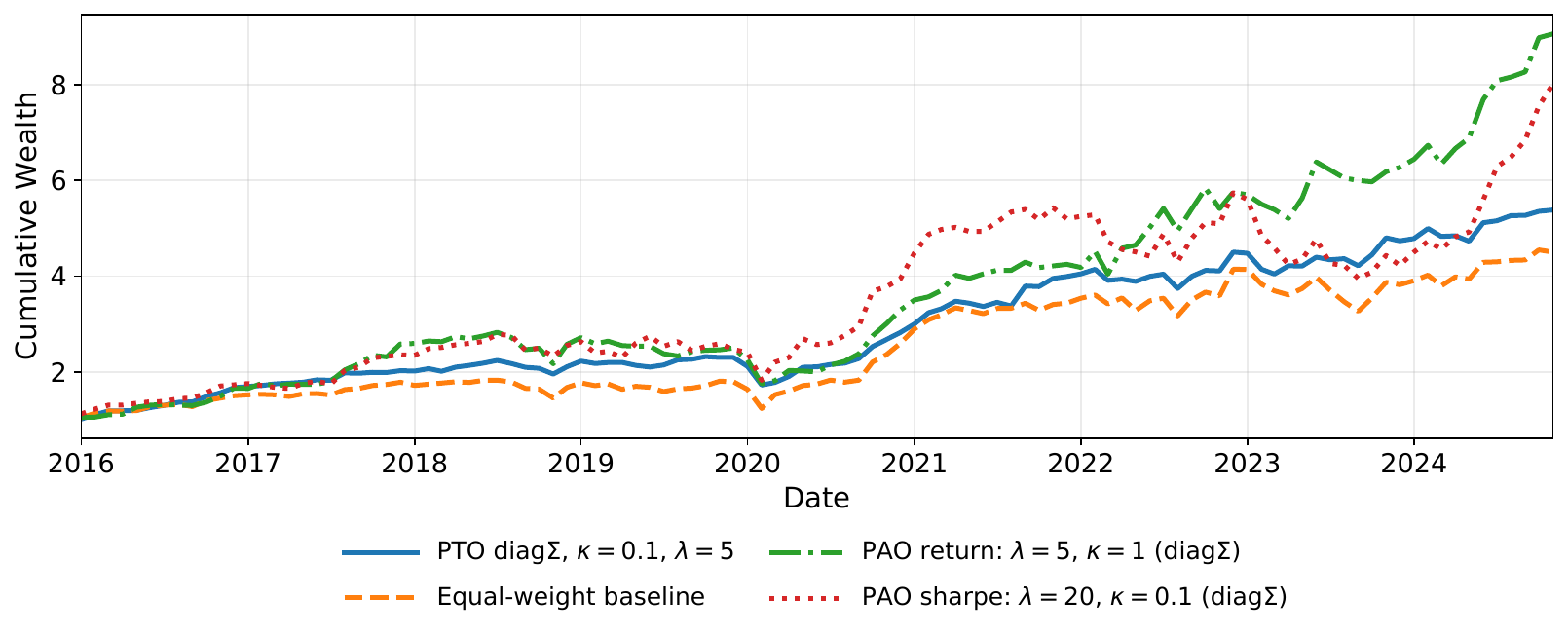}
\caption{Aggregate cumulative wealth paths over the test period (2016--2024) for the equal-weight benchmark and selected PTO/PAO configurations.}
\label{fig:agg_wealth}
\end{figure}

\begin{figure}[h]
\centering
\includegraphics[width=0.95\linewidth]{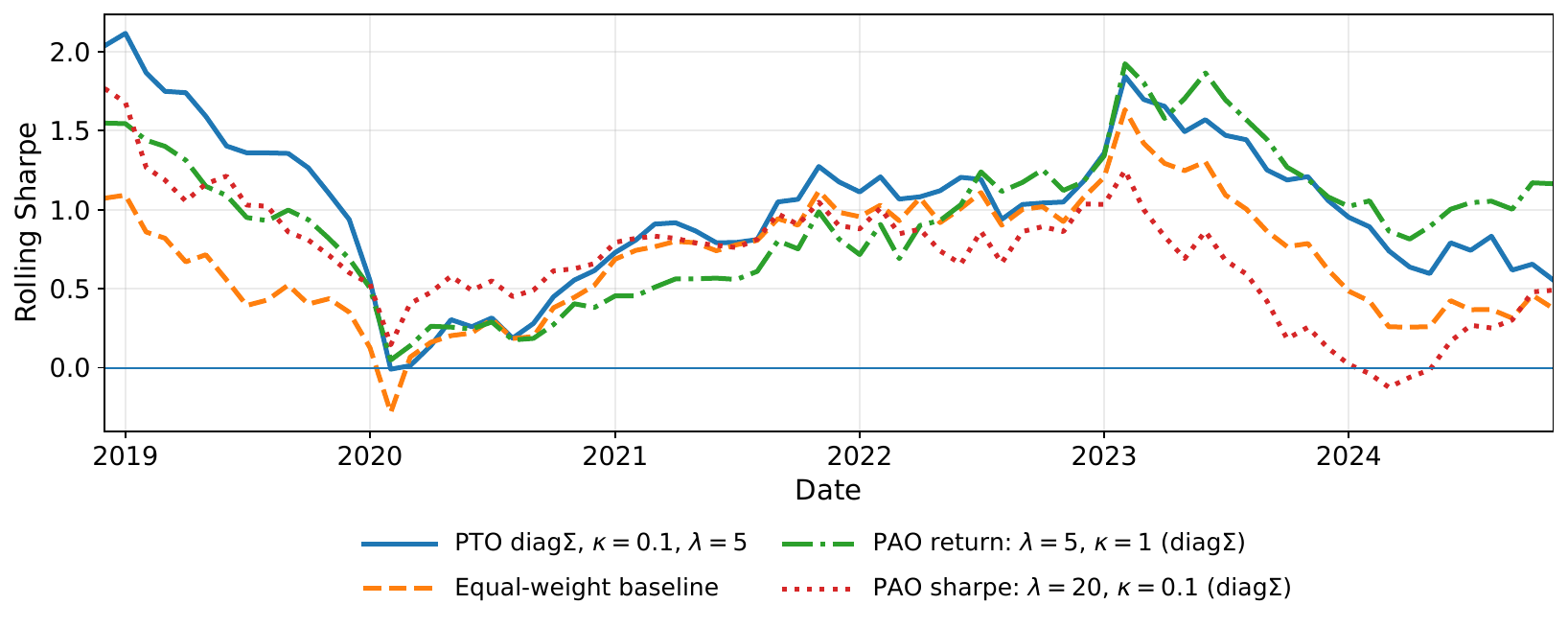}
\caption{Aggregate rolling Sharpe ratios (annualized; 36-month window) over the test period for the equal-weight benchmark and selected PTO/PAO configurations.}
\label{fig:agg_roll_sharpe}
\end{figure}

\subsection{Generation Process: Model Probing.}
\label{subsec:results}

We now evaluate the proposed probing framework. The purpose of this stage is not to assess return forecasts in isolation, but to explain how trained portfolio decision pipelines translate changes in macroeconomic inputs into optimized portfolio decisions. In each experiment, we perturb the macroeconomic state $\vec{m}$, reconstruct the corresponding firm--macro interaction features, pass the resulting features through a fixed trained decision pipeline, and solve the same robust long-only portfolio optimization problem. The generated scenarios therefore reveal how portfolio decisions respond to changes in the macroeconomic inputs entering the pipeline.

\paragraph{Scenario construction and representative states.}
\label{par:scenario_representative_states}

All probing experiments follow the same computational protocol. For a fixed trained pipeline $\pi$ and a standardized macroeconomic state $\vec{m}\in\mathbb{R}^{9}$, with coordinates $(dp,ep,bm,ntis,tbl,tms,dfy,svar,infl)$, we rebuild the firm--macro interaction features at $\vec{m}$, obtain the pipeline-specific score vector $\vec{b}_{\pi}(\vec{m})$, and solve the robust long-only portfolio problem

\begin{equation}
\label{eq:runtime_policy_map}
    \vec{w}_{\pi}(\vec{m})
    \in
    \operatorname*{arg\,max}_{\vec{w}}
    \left\{
    \vec{b}_{\pi}(\vec{m})^\top \vec{w}
    -
    \kappa \sqrt{\vec{w}^{\top}\Omega\vec{w}}
    -
    \frac{\lambda}{2}\vec{w}^{\top}\Sigma\vec{w}
    \quad
    \text{s.t.}
    \quad
    \mathbf{1}^{\top}\vec{w}=1,\quad
    \vec{w}\geq 0
    \right\}.
\end{equation}

For a given anchor month $\bar{t}$, the anchor-date firm cross-section, the realized next-month return vector, and the covariance estimation window are held fixed. The scenario generator perturbs only the macroeconomic state, isolating the role of the macroeconomic inputs in the decision rule itself.

Each experiment defines a decision-level probing function $G(\vec{m})$. Economic plausibility is encoded separately by the prior $p_{0}$, defined as the stationary law of the estimated VAR$(1)$ macroeconomic process in Section~\ref{subsec:generation}). Thus, the probing functions used below are pure decision-level objectives. We sample the resulting Gibbs distribution $p_{\pi, \tau}(\vec{m})\propto\exp(-\widetilde{G}(\vec{m})/\tau)$ using the Metropolis-adjusted Langevin algorithm. Each experiment uses four independent chains of $\,12,000\,$ steps. The first half of each chain is discarded as burn-in, and we apply thinning to the post-burn-in draws to reduce autocorrelation. We summarize each chain by a \emph{chain-representative macroeconomic state}, defined as the average of the post-burn-in thinned draws from that chain. Regime classifications, historical-analogues, and recession-probability summaries are then applied as interpretive diagnostics to these generated states; they are not part of the probing objective itself (Appendix~\ref{app:econ_interp}). 

\paragraph{Predict-then-optimize versus predict-and-optimize: closing the gap.}
\label{par:pto_pao_catchup}
Our first probe compares the two training paradigms at an anchor date where the fixed PAO pipeline outperforms the PTO pipeline. We use June~2024 as the anchor because it is a recent test-period month in which fixed PAO has a clear realized advantage over PTO, while remaining outside an acute crisis episode. This allows us to examine whether the PAO advantage persists when the macroeconomic state is perturbed within a plausible neighbourhood of the anchor. At the observed June~2024 anchor state, the fixed PAO pipeline realizes an excess return of $0.33\%$, whereas the PTO pipeline realizes $-1.12\%$, implying a return gap of $1.45$ percentage points in favor of fixed PAO. We therefore ask:
\begin{quote}
\emph{Under which macroeconomic states does the PTO pipeline close or reverse its realized return
gap relative to the fixed PAO pipeline?}
\end{quote}
With $r_{j}(\vec{m})=\vec{w}_{j}(\vec{m})^{\top}\vec{r}_{\bar t+1}$ for $j\in\{\mathrm{PTO},\mathrm{PAO}\}$, the probe uses the decision-level objective
\begin{equation}
\label{eq:pto_pao_probe}
G_{\mathrm{PTO>PAO}}(\vec{m}) = -\bigl(r_{\mathrm{PTO}}(\vec{m})-r_{\mathrm{PAO}}(\vec{m})\bigr),
\end{equation}
so that low values of $G$ correspond to states under which PTO overtakes fixed PAO. Economic plausibility is supplied by the prior $p_{0}$ localized around the June~2024 anchor.

The ranking reverses in all four chains. At the generated states, PTO pipeline outperforms fixed PAO by $1.4$, $2.5$, $2.5$, and $3.7$ percentage points, with a median improvement of $+2.5$ percentage points. Hence, the observed $1.45$-point advantage of fixed PAO at the anchor is not only eliminated but reversed under the generated macroeconomic states.

Figure~\ref{fig:pto_pao_density} shows how the macroeconomic inputs change in the states where PTO catches up. The clearest shift occurs in the default (credit) spread $dfy$, which widens relative to the anchor. This is accompanied by an increase in stock-market variance $svar$ and a rise in the dividend--price ratio $dp$, indicating cheaper equity valuations. Taken together, these movements describe a coherent macro-financial environment characterized by tighter credit conditions, higher market uncertainty, and more defensive pricing rather than a set of unrelated perturbations. Figure~\ref{fig:pto_pao_regime} provides a regime-based interpretation of these generated states. Relative to the June~2024 anchor, which is classified almost entirely as expansion, the generated chains are reclassified as contraction. This diagnostic should not be interpreted as evidence of a structural regime switch. Rather, it provides a compact summary of the fact that the generated macroeconomic states jointly display weaker financial conditions than the anchor. Figure~\ref{fig:pto_pao_nfci} provides additional historical context. The nearest historical analogues to the generated states are concentrated in late~2022 and 2023, a period marked by higher interest rates, an inverted yield curve, and 2023 regional-banking stress. Since these episodes were not formal recession months, the generated states are best interpreted as plausible credit-tightening or contraction-like environments rather than crash scenarios.

Taken together, the results indicate that the superiority of PAO at the June~2024 anchor is macro state dependent. While the decision-focused learning literature generally finds PAO pipelines to outperform the two-step PTO baseline \citep{Elmachtoub2022, Uysal24}, our probe shows this superiority is not uniform across macroeconomic conditions. Under more adverse macro-financial conditions, PTO erases and reverses the observed return gap. The PAO advantage at this anchor is therefore a fair-weather one: it holds when markets are calm but not when they turn stressed. Overall, this experiment illustrates how the probing framework identifies macroeconomic states under which the relative performance of competing decision pipelines changes.

\begin{figure}[htbp]\centering
\includegraphics[width=0.95\linewidth]{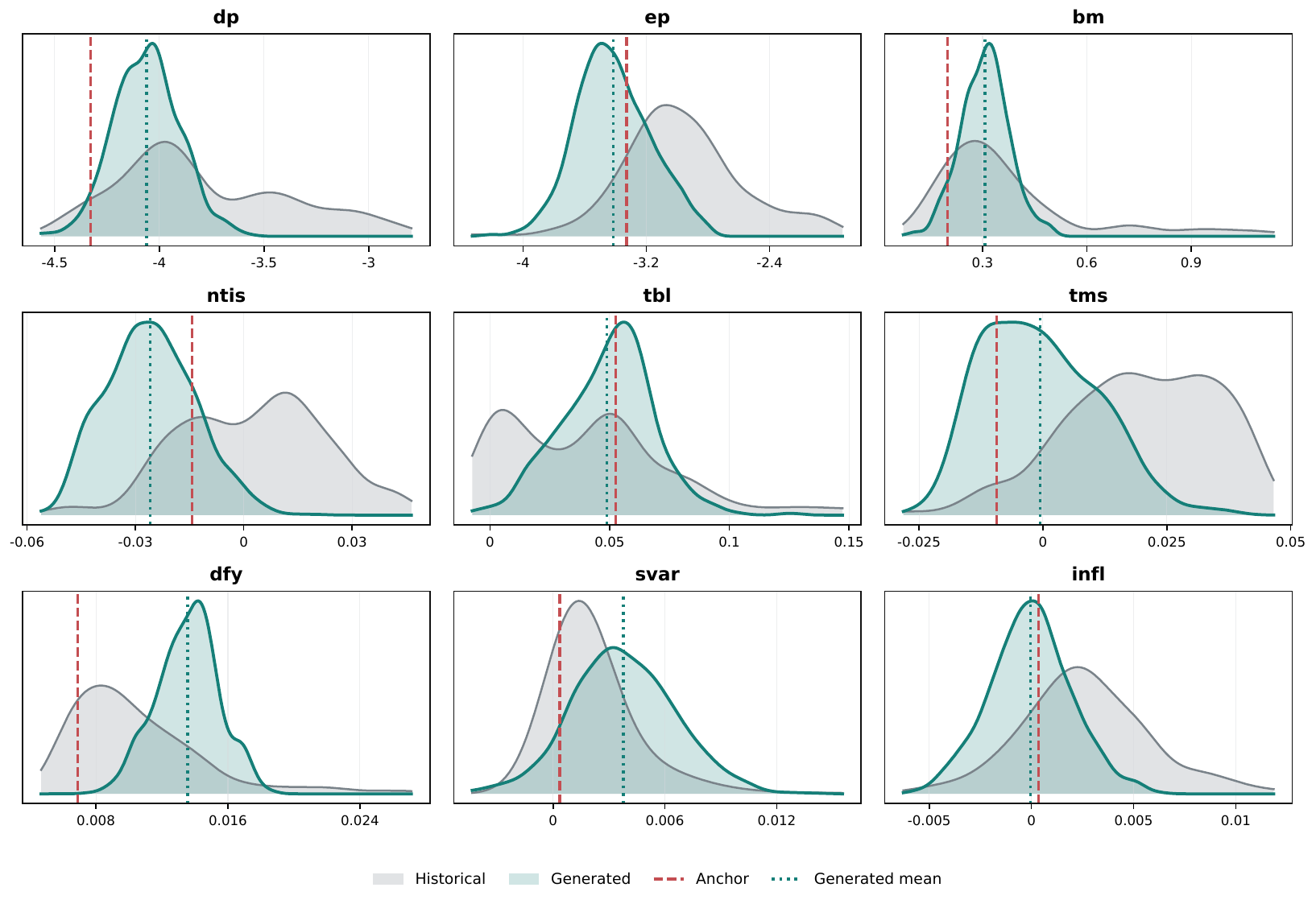}
\caption{PTO--PAO catch-up: generated macroeconomic density against the historical
panel.}
\label{fig:pto_pao_density}
\end{figure}

\begin{figure}[htbp]\centering
\includegraphics[width=0.78\linewidth]{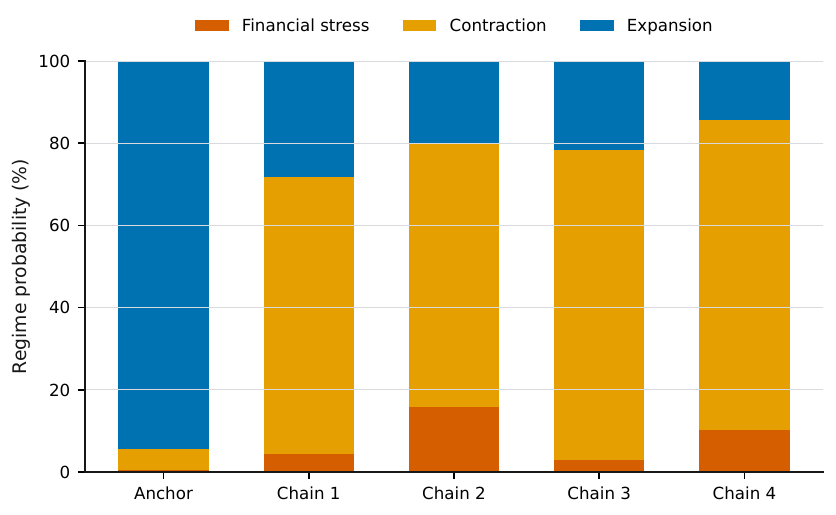}
\caption{PTO--PAO catch-up: regime-probability composition of the anchor and generated states.}
\label{fig:pto_pao_regime}
\end{figure}

\begin{figure}[htbp]\centering
\includegraphics[width=0.95\linewidth]{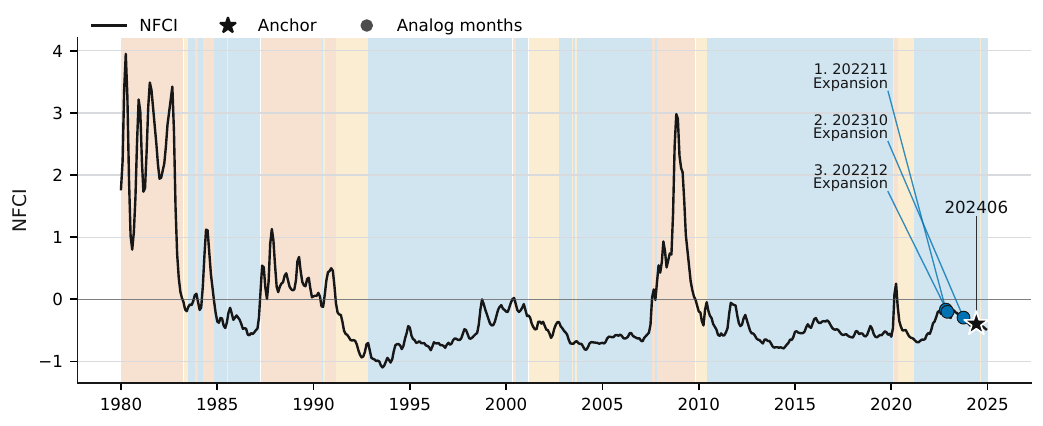}
\caption{PTO--PAO catch-up: NFCI history with the nearest generated-state analogues.}
\label{fig:pto_pao_nfci}
\end{figure}

\paragraph{Allocation structure of the fixed PAO pipeline: diversification.}
\label{par:pao_diversify}
We next turn to a single fixed pipeline and examine how macroeconomic inputs affect its allocation \emph{structure}. Because the PAO learning objective is evaluated after the optimization layer, the resulting pipeline may place most of its weight on a small number of assets when the learned scores interact strongly with the robust allocation problem. We use April~2020 as the anchor because it corresponds to an acute COVID-19 stress episode in the test period and produces a relatively concentrated fixed PAO allocation. It therefore provides a natural setting for probing when the same trained pipeline spreads its allocation more broadly. This motivates the following probing question:
\begin{quote}
\emph{Under which macroeconomic states does the fixed PAO pipeline spread its allocation more broadly across the eligible assets?}
\end{quote}

We measure allocation dispersion using entropy $H(\vec{w})=-\sum_{i}w_{i}\log w_{i}$, the Herfindahl concentration index $\mathrm{HHI}(\vec{w})=\sum_{i}w_{i}^{2}$, and the effective number of holdings $N_{\mathrm{eff}}=1/\mathrm{HHI}$. The diversification probe is defined as
\begin{equation}
\label{eq:pao_diversify_probe}
G_{\mathrm{div}}(\vec{m}) = -H\bigl(\vec{w}_{\mathrm{PAO}}(\vec{m})\bigr),
\end{equation}
so that low values of $G_{\mathrm{div}}(\vec{m})$ correspond to macroeconomic states under which the fixed PAO allocation has higher entropy and is therefore more diversified. Economic plausibility is supplied by the prior $p_{0}$, localized around the April~2020 anchor.

At the April~2020 anchor, the fixed PAO allocation is relatively concentrated, with entropy $2.27$, an effective number of holdings of $5.6$, and a maximum weight of $36.7\%$. In the four reported chains, the generated states increase allocation entropy to the range $2.31$--$2.64$ and the effective number of holdings to $5.8$--$9.4$. The HHI and maximum portfolio weight fall correspondingly. These results show that the concentration of the fixed PAO allocation is state-dependent: under the generated macroeconomic states, the same trained pipeline spreads the portfolio more broadly across the eligible assets. The probe therefore explains an allocation mechanism, rather than only a return outcome.

Figure~\ref{fig:pao_diversify_density} summarizes the macroeconomic states associated with broader PAO allocations. The main financial-stress indicators ease relative to the April~2020 anchor: stock-market variance $svar$ falls and the default spread $dfy$ narrows. These movements indicate calmer financial-market conditions and lower credit stress. Under these macroeconomic states, the learned scores no longer induce the robust optimizer to concentrate heavily in a small subset of assets. The allocation therefore becomes more dispersed, as reflected by higher entropy, a larger effective number of holdings, and a lower maximum portfolio weight.

Figures~\ref{fig:pao_diversify_regime} and~\ref{fig:pao_diversify_nfci} provide complementary interpretation. The nearest historical analogues in Figure~\ref{fig:pao_diversify_nfci} are calm months, most closely October~2005,which is consistent with the reduction in market-stress indicators. In line with this, the regime classifier in Figure~\ref{fig:pao_diversify_regime} moves the generated states away from the April~2020 financial-stress anchor and labels all four chains as expansion. The diversification probe therefore identifies a macroeconomic region in which financial-market stress is substantially lower than at the anchor and conditions have normalized away from acute stress. The broader allocation should accordingly be interpreted as being associated with calmer financial conditions as markets recover, rather than with the stressed regime of the anchor itself.

Overall, the diversification probe shows that the concentration of the fixed PAO allocation is not an invariant property of the trained model. At the April~2020 stress anchor, the pipeline assigns relatively large weights to a small number of assets. When financial-market stress eases, the same pipeline produces a more diversified allocation. This illustrates how the probing framework can recover a decision-level allocation mechanism from a trained portfolio pipeline without modifying the model or imposing an explicit diversification constraint.

\begin{figure}[h]\centering
\includegraphics[width=0.95\linewidth]{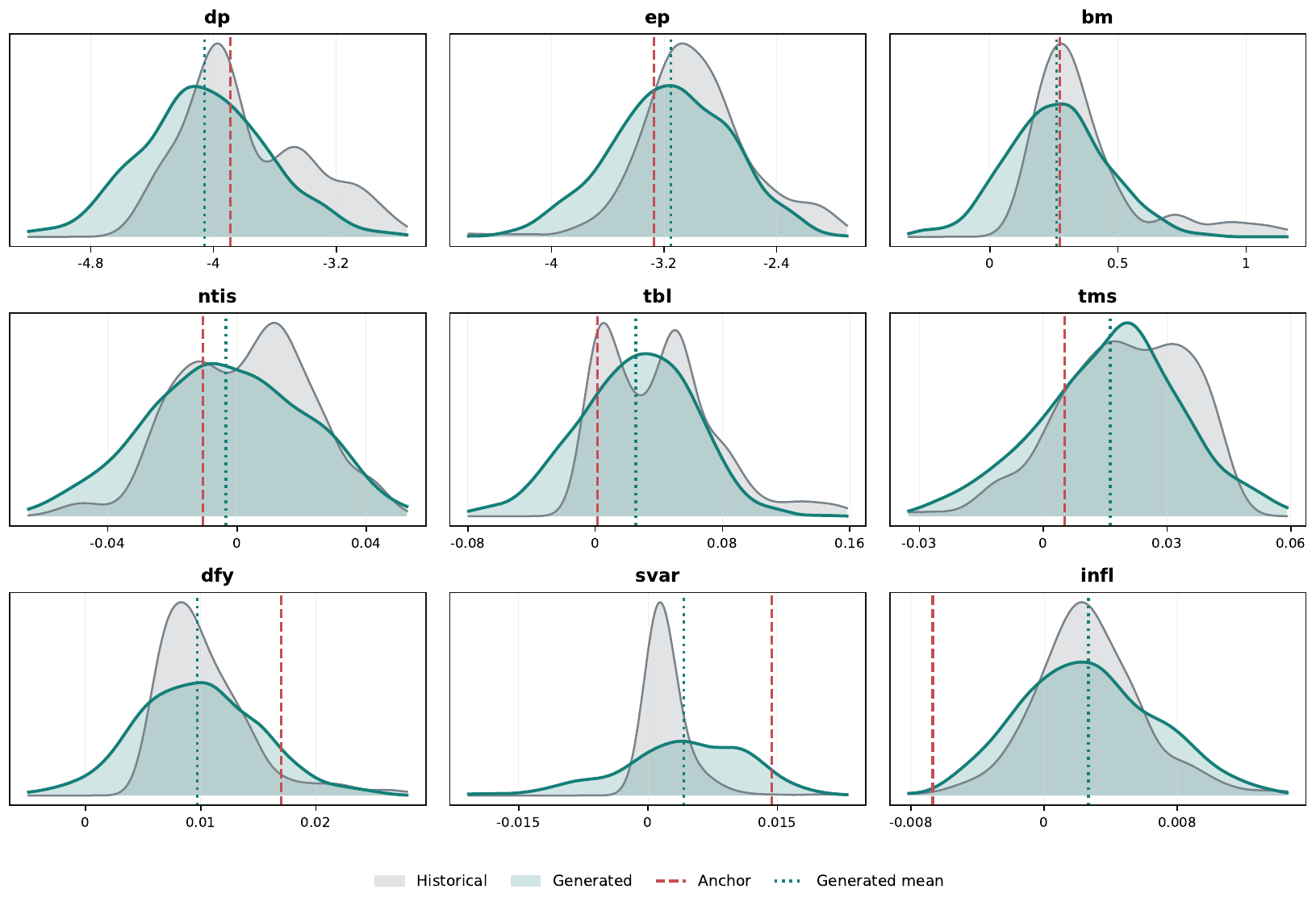}
\caption{Fixed PAO diversification: generated macroeconomic states relative to the historical panel.}
\label{fig:pao_diversify_density}
\end{figure}

\begin{figure}[h]\centering
\includegraphics[width=0.72\linewidth]{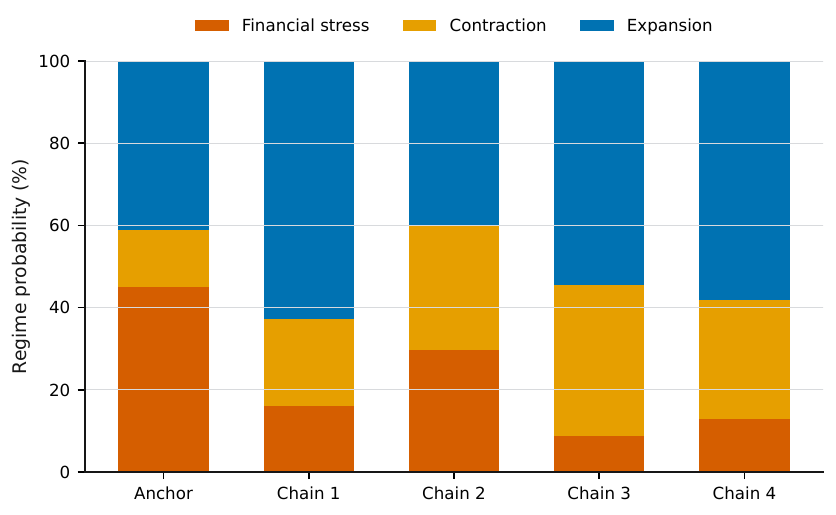}
\caption{Fixed PAO diversification: regime classification of the anchor and generated states.}
\label{fig:pao_diversify_regime}
\end{figure}

\begin{figure}[h]\centering
\includegraphics[width=0.95\linewidth]{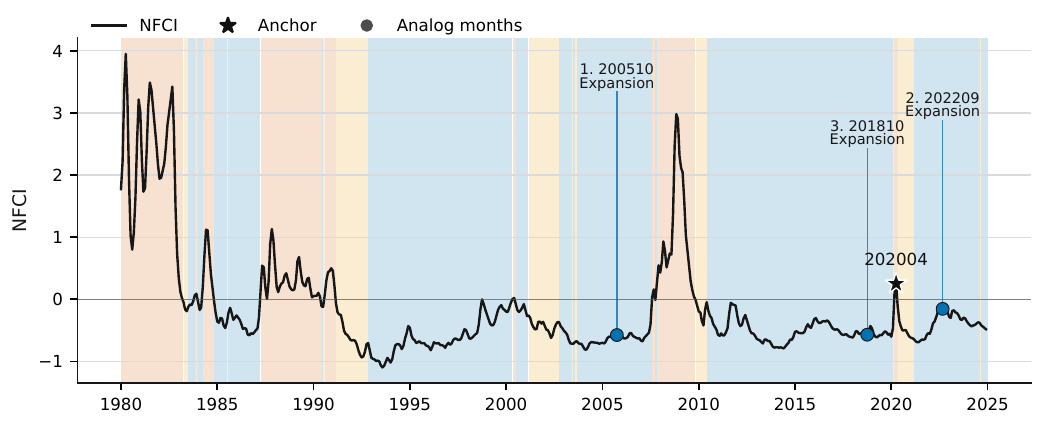}
\caption{Fixed PAO diversification: NFCI history with the nearest historical analogues.}
\label{fig:pao_diversify_nfci}
\end{figure}

\paragraph{Regime-oriented comparison: summer-child versus winter-wolf.}
\label{par:sc_ww_probe}
A portfolio decision pipeline is shaped not only by its architecture and optimization layer, but also by the market environments emphasized during training. We compare two fixed PAO pipelines that share the same construction protocol and robust optimization layer, but differ in their training exposure: a policy trained on comparatively stable periods, \emph{summer-child} (SC), and a stress-oriented policy trained on a history that includes major crises, \emph{winter-wolf} (WW). The names are mnemonic; both are treated as fixed trained pipelines. We ask:
\begin{quote}
\emph{Under which macroeconomic states does the SC pipeline outperform the WW pipeline at a
stress-period anchor date?}
\end{quote}
We use April~2020 as the anchor, an acute COVID-19 stress episode. At the observed anchor state, the stress-oriented WW pipeline attains the higher realized excess return, $r_{\mathrm{SC}}=8.05\%$ against $r_{\mathrm{WW}}=9.37\%$, an SC$-$WW gap of $-1.32$ percentage points. With $r_{j}(\vec{m})=\vec{w}_{j}(\vec{m})^{\top}\vec{r}_{\bar t+1}$ for $j\in\{\mathrm{SC},\mathrm{WW}\}$, the probe uses the decision-level objective
\begin{equation}
\label{eq:sc_ww_probe}
G_{\mathrm{SC>WW}}(\vec{m}) = -\bigl(r_{\mathrm{SC}}(\vec{m})-r_{\mathrm{WW}}(\vec{m})\bigr),
\end{equation}
\noindent so that low values of $G$ correspond to states under which SC outperforms WW. Economic plausibility is supplied by the prior $p_{0}$ localized around the April~2020 anchor. 

The ranking reverses in all four chains. At the generated states, SC outperforms WW by $7.9$, $8.1$, $7.9$, and $7.7$ percentage points with a median gap of $+7.9$ percentage points. Thus, the stress-oriented WW advantage observed at the April~2020 anchor does not persist uniformly across nearby plausible macroeconomic states.

Figure~\ref{fig:sc_ww_density} summarizes the macroeconomic states under which SC overtakes WW. The generated states are not uniformly calmer than the April~2020 anchor, but they differ from the COVID-19 shock in economically meaningful ways. Stock-market variance $svar$ falls relative to the anchor, indicating lower market volatility, while book-to-market ratio $bm$ and dividend--price ratio $dp$ rise, indicating cheaper equity valuations. Inflation $infl$ and the Treasury bill rate $tbl$ are also higher. Taken together, these movements describe a weaker but less panic-like macro-financial environment: closer to a slower, rate-driven downturn than to the sudden high-volatility COVID-19 shock.

Figures~\ref{fig:sc_ww_regime} and~\ref{fig:sc_ww_nfci} provide additional interpretation. The regime classifier in Figure~\ref{fig:sc_ww_regime} classifies the generated states predominantly as contraction, whereas the April~2020 anchor reflects a more mixed stress environment. The nearest historical analogues in Figure~\ref{fig:sc_ww_nfci} include downturn-like episodes associated with interest-rate pressure rather than acute market panic, including the late-2018 selloff during the Federal Reserve's rate-hiking cycle, the end-2015 first-hike period, and a 2020 contraction month.

The main implication is that the value of stress-oriented training depends on the type of adverse macroeconomic environment. WW performs better at the April~2020 anchor, which corresponds to a sudden, high-volatility crisis episode. However, under generated states resembling a slower, rate-driven contraction, SC overtakes WW by about eight percentage points across the four chains. The result should not be read as showing that either policy is uniformly safer or superior. Rather, it shows that the relative ranking of the two trained pipelines is state-dependent: different forms of macroeconomic stress can favor different learned decision rules.
\begin{figure}[h]\centering
\includegraphics[width=0.95\linewidth]{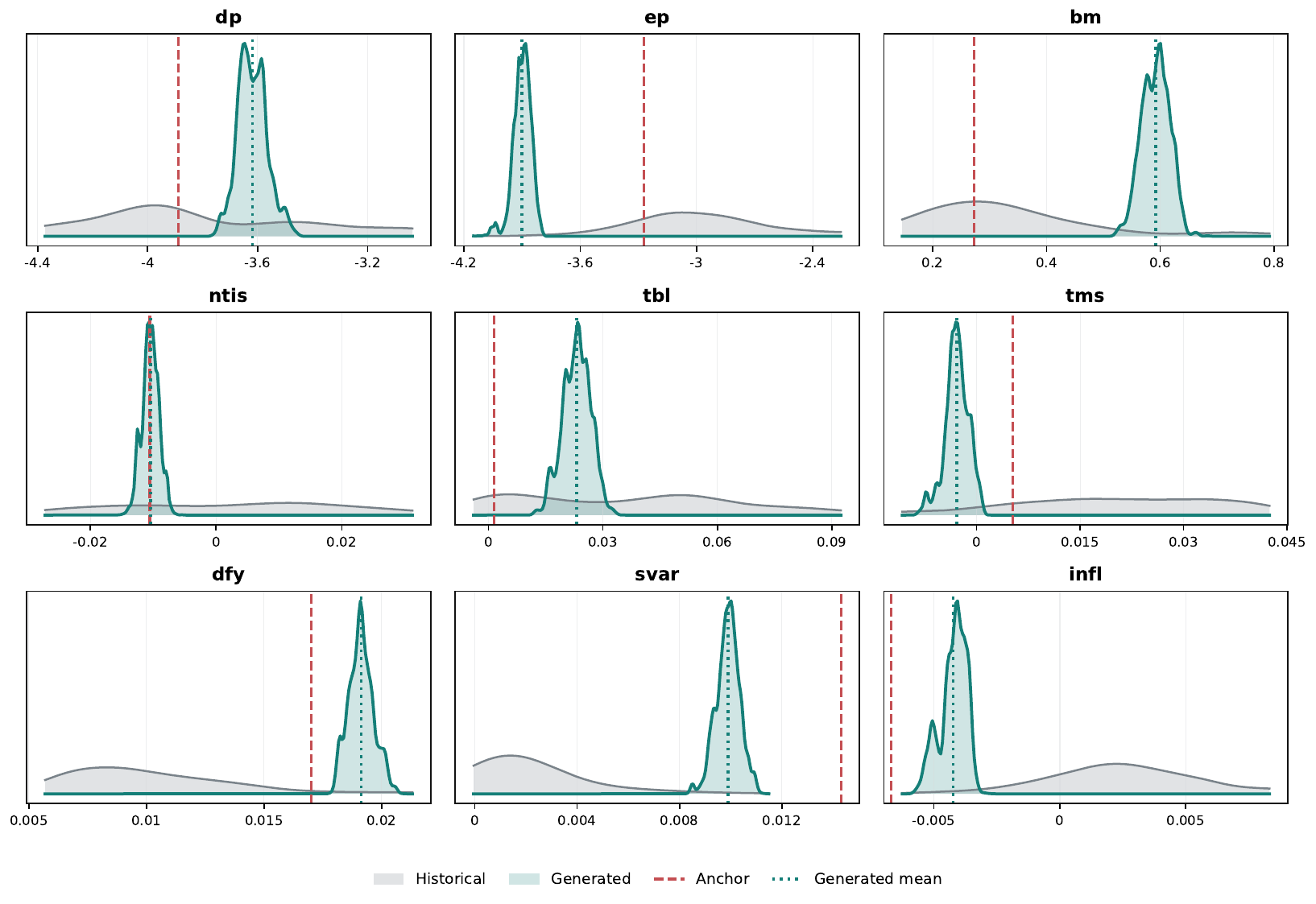}
\caption{SC versus WW: generated macroeconomic density relative to the historical
panel.}
\label{fig:sc_ww_density}
\end{figure}

\begin{figure}[h]\centering
\includegraphics[width=0.78\linewidth]{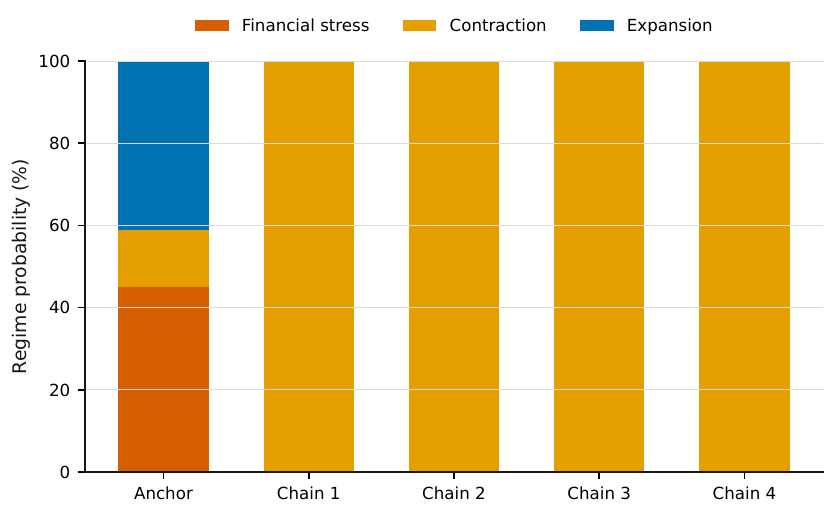}
\caption{SC versus WW: regime classification of the anchor and generated states.}
\label{fig:sc_ww_regime}
\end{figure}

\begin{figure}[h]\centering
\includegraphics[width=0.95\linewidth]{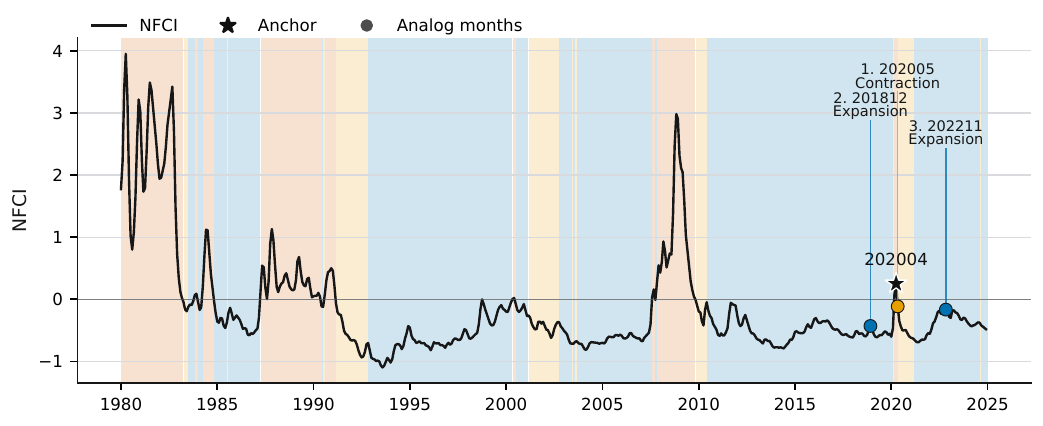}
\caption{SC versus WW: NFCI context and nearest historical analogues.}
\label{fig:sc_ww_nfci}
\end{figure}

\paragraph{Targeting a benchmark return.}
\label{par:pto_benchmark_return}
The preceding scenarios contrasted two pipelines or probed allocation structure; we now invert a single PTO pipeline against an explicit performance target, the most directly interpretable use of the framework. Given a benchmark excess return $b$, the probe asks which macroeconomic states would have made the pipeline realize that return. Rather than pose this once, we pose it at two anchors of opposite character, a calm, late-cycle month and a banking-stress month, and read the two answers together. We ask:

\begin{quote}
\emph{Under which macroeconomic states would the PTO pipeline have realized a portfolio return close to a benchmark $b$?}
\end{quote}
With $r_{\mathrm{PTO}}(\vec{m})=\vec{w}_{\mathrm{PTO}}(\vec{m})^{\top}\vec{r}_{\bar t+1}$, the probe uses the benchmark-return objective of Section~\ref{subsec:probingfunctions},
\eqref{eq:benchmarkG},
\begin{equation*}
G(\vec{m}) = \bigl(r_{\mathrm{PTO}}(\vec{m}) - b\bigr)^{2},
\end{equation*}
which is minimized when the realized PTO return matches $b$; plausibility is supplied by the prior $p_{0}$ localized around each anchor. Read across the two anchors, the probe does more than translate a target into a scenario: it traces the pipeline's how far the realized return can be pushed before no economically plausible state will carry it further.

\emph{A benign anchor (October~2019): both benchmarks are within reach.} October~2019 is a calm, late-cycle expansion month in which the PTO pipeline realizes an excess return of $-0.68\%$, having returned $+1.59\%$ in the preceding month (September~2019) while the market (S\&P~500 excess) returned
$+0.36\%$. The pipeline thus lagged both the market and its own recent record in an otherwise quiet environment. We target the two benchmarks it fell short of, and both are caught. Across the four chains the realized return reaches the market benchmark at $0.35$, $+0.40$, $+0.28$, and $+0.31\%$, and the repeat last month benchmark at $+1.56$, $+1.45$, $+1.50$, and $+1.63\%$.

Figure~\ref{fig:pto_benchmark_201910_density} shows what the macroeconomic inputs do to lift the return, and the picture is an economically familiar one: the variables long used to forecast the equity premium all move in the return-raising direction \citep{GoyalWelch2008}. Stocks become cheaper as well as the dividend--price, earnings--price, and book-to-market ratios ($dp$, $ep$, $bm$) all  rise, and risk premia widen, with the default spread $dfy$ and the term spread $tms$ both increasing. The shifts are mild, and the regime model (Figure~\ref{fig:pto_benchmark_201910_regime}) keeps all four chains in the expansion state, while the closest months in history (Figure~\ref{fig:pto_benchmark_201910_nfci}) are calm mid-2000s expansion dates (2005--2007), sitting at the same low level of financial stress as the anchor itself. In plain terms, only a modest, plausible repricing of risk is needed for the pipeline to catch up,and the probe, given nothing but a return target, independently rediscovers the classical drivers of high expected returns.

\emph{A stressed anchor (March~2023): the market benchmark is a ceiling.} March~2023 is a banking-stress month in which the pipeline realizes $-1.28\%$, having returned $+1.93\%$ in February~2023 while the S\&P~500 returned $+3.28\%$ in excess of the risk-free rate. Here the two benchmarks come apart. The ``repeat last month'' target is largely recovered,the chains reach $+0.94$, $+1.37$, $+1.19$, and $+1.31\%$ (median $+1.25\%$, within $0.68$~pp of $+1.93\%$) under states that stay empirically plausible. The market benchmark, however, is not reached at all: every chain plateaus at almost the same place,$+1.41$, $+1.59$, $+1.56$, and $+1.63\%$ (median $+1.57\%$), leaving a residual gap of $1.71$~pp that does not close. That the four independent chains agree so tightly on the same shortfall is the signature of a genuine ceiling rather than sampler  noise. Figure~\ref{fig:pto_benchmark_density} shows how hard the sampler tries: net equity issuance $ntis$ and the term spread $tms$ collapse by more than three standardized units, the short rate $tbl$ and the default spread $dfy$ climb, and the representative states abandon the anchor regime altogether, turning into financial stress in all four chains (Figure~\ref{fig:pto_benchmark_regime}). The generated state has no close historical analogue at all: the nearest distinct months (Figure~\ref{fig:pto_benchmark_nfci}) are scattered years away, early~2007, early~2019, and early~2024, and even these lie about three times farther in standardized macro space than the October~2019 analogues. The optimizer pushes the macroeconomic state to the very edge of what the historical record will support and still cannot manufacture the market return.

Taken together, the two anchors turn a performance target into a feasibility diagnosis. At the benign October~2019 anchor the attainable-return is wide: both the market and the pipeline's recent record sit inside it, reachable through mild, plausible shifts that recover the canonical return predictors. At the stressed March~2023 anchor it collapses: only the modest own-record target is approachable, while the market return lies outside it, unreachable even when the macro state   is driven to the limit of plausibility. Crucially, the probe varies only the macroeconomic inputs while holding the firm characteristics and the trained pipeline fixed; the March~2023 shortfall is therefore not an artifact of an unlucky target but a property of the pipeline itself, given the firms it faced and the weights it had learned, \emph{no} plausible macroeconomic state would have let it match the market. 
Therefore using the framework we were able to recognize that the pipelines fail to achieve what S\&P500 index has realized regardless of the macroeconomic variables. 


\begin{figure}[htbp]\centering
\includegraphics[width=0.95\linewidth]{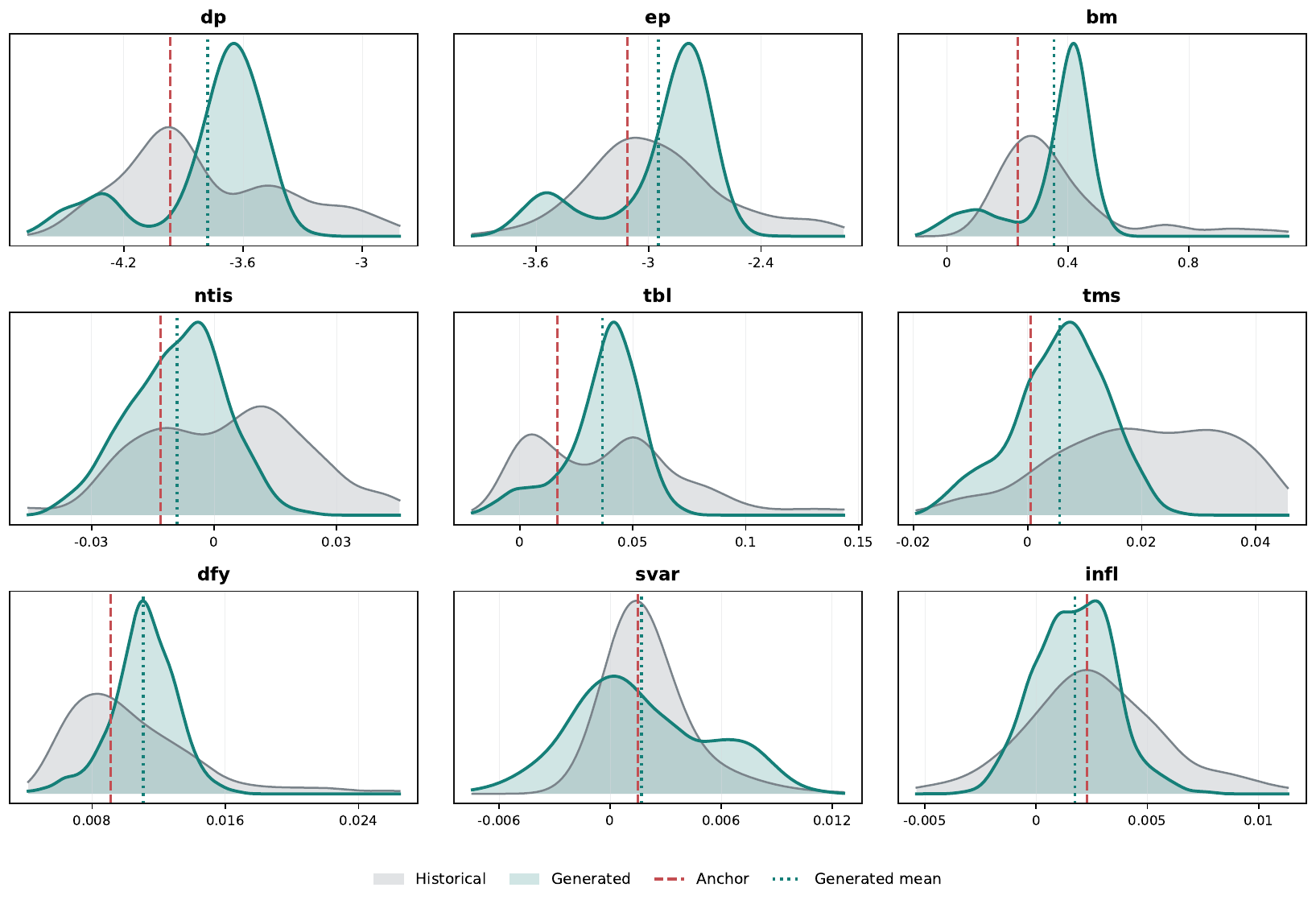}
\caption{Benchmark return, October~2019 (catch-up): generated macroeconomic density against the
historical panel, by variable.}
\label{fig:pto_benchmark_201910_density}
\end{figure}

\begin{figure}[htbp]\centering
\includegraphics[width=0.78\linewidth]{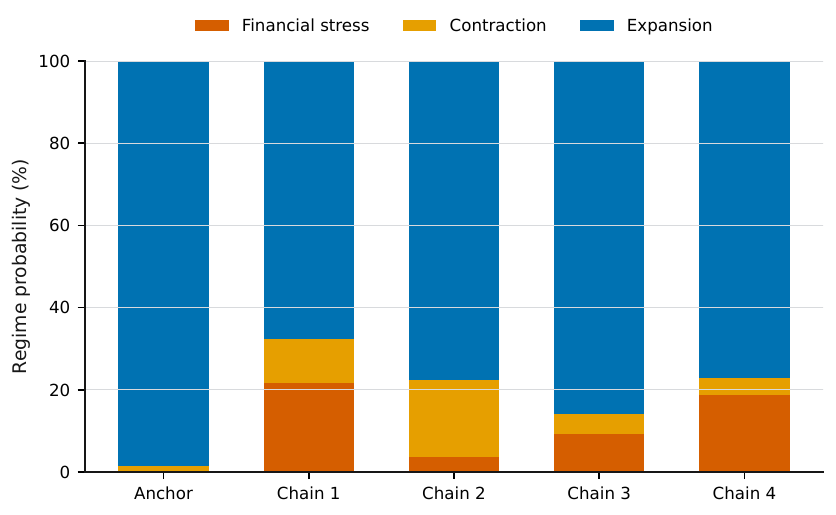}
\caption{Benchmark return, October~2019 (catch-up): regime-probability composition of the anchor and
of each generated chain.}
\label{fig:pto_benchmark_201910_regime}
\end{figure}

\begin{figure}[htbp]\centering
\includegraphics[width=0.95\linewidth]{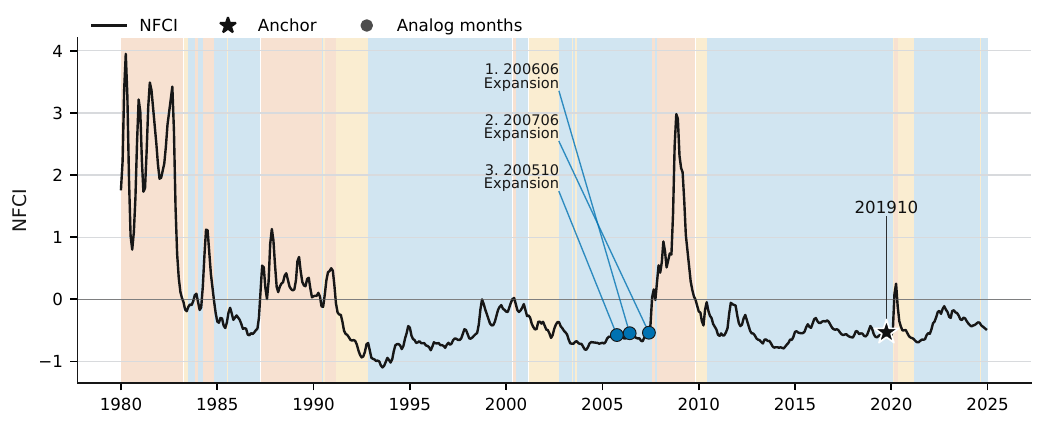}
\caption{Benchmark return, October~2019 (catch-up): NFCI history with the nearest generated-state
analog months.}
\label{fig:pto_benchmark_201910_nfci}
\end{figure}

\begin{figure}[htbp]\centering
\includegraphics[width=0.95\linewidth]{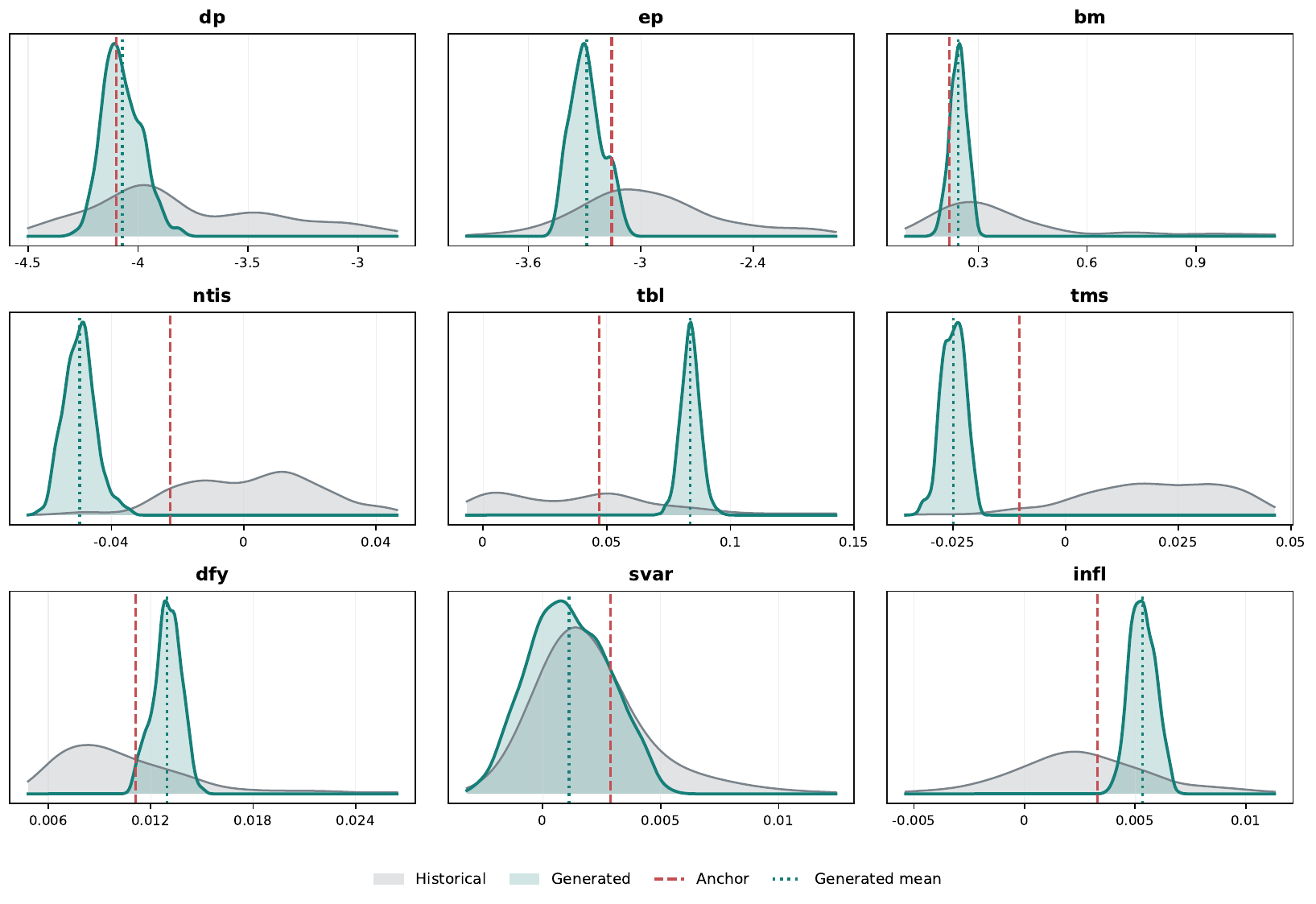}
\caption{Benchmark return, March~2023: generated macroeconomic density against the
historical panel, by variable (S\&P benchmark).}
\label{fig:pto_benchmark_density}
\end{figure}

\begin{figure}[htbp]\centering
\includegraphics[width=0.78\linewidth]{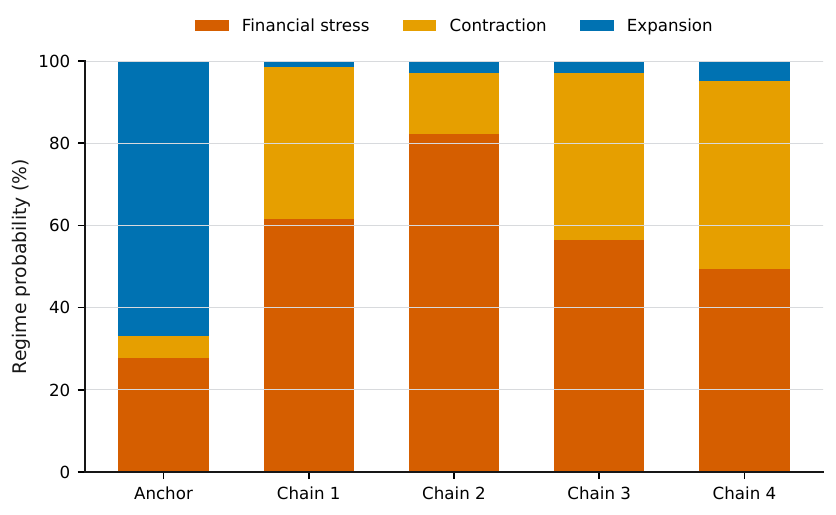}
\caption{Benchmark return, March~2023: regime-probability composition of the anchor and of
each generated chain.}
\label{fig:pto_benchmark_regime}
\end{figure}

\begin{figure}[htbp]\centering
\includegraphics[width=0.95\linewidth]{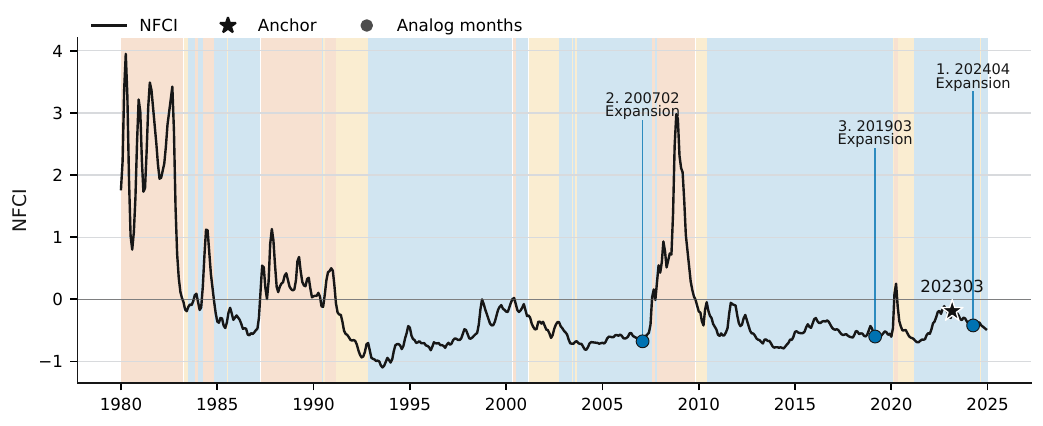}
\caption{Benchmark return, March~2023: NFCI history with the nearest generated-state analog
months.}
\label{fig:pto_benchmark_nfci}
\end{figure}

\section{Conclusions.}
\label{sec:conclusions}

In this work, we introduce a novel \emph{predict-optimize-explain} framework for interpreting portfolio optimization pipelines using targeted scenario generation, yielding fresh insights into how machine learning-driven portfolios behave under different conditions. By formulating what-if questions at the decision level, we gain economically meaningful insights into its behavior, going beyond local feature attribution methods by revealing how decision related quantities require global changes to conditions. This allows us to get functional insights on the entire allocation pipeline, improving the user's understanding of how all the pieces of the pipeline fit together by generating data points for which the user has economic intuition and historical knowledge. 
For example we found that when models are probed to increase diversification, they identify scenarios (e.g., a flatter yield curve with low default return spreads) that compel broader asset allocation, and notably, such diversification tended to improve portfolio returns. This observation reinforces classic portfolio principles on the benefits of diversification, now seen through the lens of an AI-driven decision system. Furthermore, our contrastive experiment showed that a model trained on a full history including major crises (the ``winter-wolf'') generally outperforms a model trained only on stable periods (the ``summer-child'') when a new crisis strikes. This highlights the importance of designing models with awareness of different macroeconomic regimes: incorporating diverse market scenarios in training can yield more robust and reliable portfolio decisions under stress.

Our predict-optimize-explain approach provides a practical tool for both researchers and practitioners to interrogate and interpret various portfolio models. By revealing which economic conditions significantly influence a model’s decisions, the framework enhances transparency and trust in model outputs. For portfolio managers and decision-makers, these insights offer guidance on model selection and deployment, suggesting that integrating decision-focused learning and ensuring exposure to varied market regimes can lead to more resilient investment strategies. Overall, bridging predictive analytics with optimization and explainability can improve not only the performance of portfolio models but also their interpretability and robustness, paving the way for more informed decision-making in asset management.

Future work may explore several natural extensions of the proposed framework. While our experiments focused on probing macroeconomic variables to retain interpretability, the methodology extends seamlessly to other input dimensions. For instance, one could fix macroeconomic conditions and generate synthetic firm characteristic vectors to investigate how the introduction of new or altered firms would influence portfolio allocations. This would enable a richer understanding of how models respond to changes in the asset universe. In addition, instead of regularizing around a specific macroeconomic snapshot, future implementations might incorporate trajectory-level regularizers to reflect consistency with historical evolution over time. Lastly, although our study emphasizes decision pipelines with explicit optimization layers, the framework is equally applicable to end-to-end, model-free strategies that directly predict allocation weights, as in recent advances. These directions offer promising opportunities to further align explainable decision-making with realistic market settings.

\clearpage

\bibliographystyle{abbrvnat}
\bibliography{POE}

\clearpage

\appendix

\section{Macroeconomic Predictors.}
\label{app:macro_preds}

We reserve this appendix to give the definitions of the macroeconomic predictors that we have used throughout our computational study (see Table \ref{tab:macro_defs}).

\begin{table}[H]
\centering
\small
\caption{Macroeconomic predictors from the Goyal--Welch dataset and their definitions.}
\label{tab:macro_defs}
\begin{tabular}{@{}l l p{0.72\linewidth}@{}}
\toprule
Symbol & Name & Definition / construction \\
\midrule
$dp$ & Dividend--price ratio &
$\log(D_t)-\log(P_t)$, where $D_t$ is the trailing 12-month sum of dividends of the S\&P500 index and $P_t$ is its price at time $t$. \\

$ep$ & Earnings--price ratio &
$\log(E_t)-\log(P_t)$, where $E_t$ is the trailing 12-month sum of earnings of the S\&P500 index and $P_t$ is the index price level. \\

$bm$ & Book-to-market ratio &
Book value divided by market value for the Dow Jones Industrial Average (with standard reporting-lag convention for book value). \\

$ntis$ & Net equity expansion &
Net equity issuance activity scaled by aggregate market capitalization; ratio of 12-month moving sum of net equity issues by New York Stock Exchange (NYSE) divided by total end-of-year market capitalization of NYSE stocks. \\

$tbl$ & Treasury bill rate &
Short-term Treasury bill rate canonical short-rate proxy; interest rate on the the 3-month T-bill. \\

$tms$ & Term spread &
$tms_t = lty_t - tbl_t$, i.e., long-term government bond yield minus the Treasury bill rate. \\

$dfy$ & Default spread &
$dfy_t = y^{\text{BAA}}_t - y^{\text{AAA}}_t$, i.e., the yield difference between Moody's BAA and AAA-rated corporate bonds. \\

$svar$ & Stock market variance &
Realized market variance from daily index returns within month $t$, typically computed as the within-month sum of squared daily returns. \\

$infl$ & Inflation rate &
Inflation computed from the Consumer Price Index (CPI). \\
\bottomrule
\end{tabular}

\vspace{2mm}
\footnotesize \noindent
\textit{Notes:} Definitions follow the constructions described in \citet{GoyalWelch2008}. The time series are obtained from the updated Goyal--Welch monthly predictor dataset \citep{GoyalWelchData2024}.
\end{table}

The macroeconomic predictors of \citet{GoyalWelch2008} have well-established economic interpretations, which we use when reading the generated scenarios. The valuation ratios $dp$, $ep$, and $bm$ (dividend-, earnings-, and book-to-market) are countercyclical and positively related to expected returns: high values typically signal that the market is priced cheaply relative to fundamentals, often in or after downturns. Net equity issuance $ntis$ is procyclical and negatively related to subsequent returns, as firms tend to issue equity when valuations are high. Among the interest-rate variables, the term spread $tms$ (long-term yield minus the Treasury-bill rate) is a classic leading indicator of the business cycle: a flat or inverted curve has historically preceded recessions \citep{estrella1996yield}. The default spread $dfy$ (the BAA$-$AAA corporate yield difference) and the stock-market variance $svar$ both rise in periods of financial stress and elevated risk premia; widening credit spreads, in particular, are a forward-looking measure of deteriorating financial conditions \citep{GilchristZakrajsek2012}. The Treasury-bill rate $tbl$ summarizes the monetary-policy stance, and inflation $infl$ captures the price environment. Read jointly, a configuration of falling $tbl$, rising $tms$, widening $dfy$, and elevated $svar$ corresponds to a contraction-like, credit-stressed state in which the central bank eases as financial conditions tighten, the regime that recurs in our predict-then-optimize versus predict-and-optimize comparison.

\section{Details of Training Process.}
\label{app:training_details}

\noindent This appendix collects implementation details that support replication but are secondary
to the main narrative in Section~\ref{sec:NumEx}. Table~\ref{tab:setup_params} summarizes the experimental
setup (data split, universe construction, EWMA risk estimation, and training hyperparameters).
We also report the supplementary out-of-sample performance tables and full parameter sweeps referenced
in Section~\ref{sec:NumEx}.

\begin{table}[h]
\centering
\caption{Experimental setup parameters.}
\label{tab:experimental_setup}
\small
\begin{tabular}{lll}
\toprule
\textbf{Category} & \textbf{Parameter} & \textbf{Setting} \\
\midrule
\multirow{4}{*}{Data and universe}
& Universe & curated 500 firms (S\&P~100 + large-cap diversified) \\
& Return lookback ($L$) & 60 months \\
& Rebalancing & monthly \\
\midrule
\multirow{3}{*}{Covariance estimation}
& EWMA decay ($\beta$) & 0.94 \\
& Diagonal shrinkage ($\delta$) & 0.10 \\
& Ridge term ($\epsilon$) & $10^{-6}$ \\
\midrule
\multirow{3}{*}{Neural network}
& Architecture & $1400 \rightarrow 32 \rightarrow 16 \rightarrow 8 \rightarrow 1$ \\
& Nonlinearity & ReLU + BatchNorm \\
& Dropout & 0.5 \\
& Patience & 10 epochs \\
\midrule
\multirow{4}{*}{Training}
& Optimizer & AdamW \\
& Learning rate & $5\times 10^{-5}$ \\
& Weight decay & $10^{-5}$ \\
& Max epochs / patience & 50 / 10 \\
\bottomrule
\label{tab:setup_params}
\end{tabular}
\end{table}

We first present full parameter sweeps for the PTO and PAO training strategies across robustness configurations $(\kappa,\boldsymbol{\Omega})$ and risk-aversion levels $\lambda$, under the common evaluation protocol described in Section~\ref{subsec:design}. Results are summarized by annualized mean excess return (Mean$_a$), annualized volatility (Vol$_a$), annualized Sharpe ratio (Sharpe$_a$), maximum drawdown (MaxDD), and terminal wealth (Wealth$_T$).

Table~\ref{tab:pto_excess_summary} reports OOS performance for the PTO strategy, where portfolio weights are obtained by solving the robust mean--variance problem~\eqref{eqn:genopt}. The main pattern is that a \emph{moderate} mean-uncertainty penalty improves risk-adjusted performance relative to both the MVO special case ($\kappa=0$) and the equal-weight benchmark. In particular, under $\boldsymbol{\Omega}=\mathrm{diag}(\boldsymbol{\Sigma})$, the configuration $\kappa=0.1$ attains the strongest Sharpe$_a$ (1.1965), while simultaneously reducing realized volatility and improving drawdown relative to the benchmark (MaxDD improves from $-0.3460$ to $-0.2603$). Increasing $\kappa$ beyond this range attenuates responsiveness to the return signal and correspondingly reduces Sharpe$_a$ (e.g., Sharpe$_a$ decreases from 1.1965 at $\kappa=0.1$ to 1.0812 at $\kappa=1.0$ under $\boldsymbol{\Omega}=\mathrm{diag}(\boldsymbol{\Sigma})$). The choice of uncertainty scaling is also consequential. Across $\kappa>0$, $\boldsymbol{\Omega}=\mathrm{diag}(\boldsymbol{\Sigma})$ dominates $\boldsymbol{\Omega}=\mathbf{I}$ in Sharpe$_a$ and drawdown in this experiment, consistent with the interpretation that diagonal scaling penalizes mean-uncertainty in proportion to asset-specific risk. Finally, as expected from model~\eqref{eqn:genopt}, when $\kappa=0$ the uncertainty term vanishes and performance is effectively independent of $\boldsymbol{\Omega}$ (the MVO special case).

\begin{table}[htbp]
\centering
\caption{PTO out-of-sample performance (monthly excess returns), January 2016--November 2024.}
\label{tab:pto_excess_summary}
\small
\begin{tabular}{lccccc}
\toprule
Decision Pipeline & Mean$_a$ & Vol$_a$ & Sharpe$_a$ & MaxDD & Wealth$_T$ \\
\midrule
EqualWeight & 0.1749 & 0.2185 & 0.8006 & $-$0.3460 & 3.8221 \\
$\boldsymbol{\Omega}=\mathrm{diag}(\boldsymbol{\Sigma})$, $\kappa=0.0$  & 0.1691 & 0.1749 & 0.9667 & $-$0.3228 & 3.9050 \\
$\boldsymbol{\Omega}=\mathrm{diag}(\boldsymbol{\Sigma})$, $\kappa=0.1$  & \textbf{0.1835} & \textbf{0.1533} & \textbf{1.1965} & \textbf{$-$0.2603} & \textbf{4.5724} \\
$\boldsymbol{\Omega}=\mathrm{diag}(\boldsymbol{\Sigma})$, $\kappa=0.5$  & 0.1763 & 0.1523 & 1.1576 & $-$0.2852 & 4.2896 \\
$\boldsymbol{\Omega}=\mathrm{diag}(\boldsymbol{\Sigma})$, $\kappa=1.0$  & 0.1737 & 0.1606 & 1.0812 & $-$0.2981 & 4.1432 \\
$\boldsymbol{\Omega}=\mathrm{diag}(\boldsymbol{\Sigma})$, $\kappa=10.0$ & 0.1798 & 0.1798 & 0.9999 & $-$0.3132 & 4.2519 \\
$\boldsymbol{\Omega}=\mathbf{I}$, $\kappa=0.0$   & 0.1691 & 0.1749 & 0.9667 & $-$0.3228 & 3.9050 \\
$\boldsymbol{\Omega}=\mathbf{I}$, $\kappa=0.1$   & 0.1797 & 0.1823 & 0.9856 & $-$0.3074 & 4.2299 \\
$\boldsymbol{\Omega}=\mathbf{I}$, $\kappa=0.5$   & 0.1754 & 0.2027 & 0.8656 & $-$0.3234 & 3.9445 \\
$\boldsymbol{\Omega}=\mathbf{I}$, $\kappa=1.0$   & 0.1754 & 0.2100 & 0.8353 & $-$0.3339 & 3.8942 \\
$\boldsymbol{\Omega}=\mathbf{I}$, $\kappa=10.0$  & 0.1750 & 0.2176 & 0.8043 & $-$0.3447 & 3.8302 \\
\bottomrule
\end{tabular}
\end{table}

Table~\ref{tab:pao_top10} reports the top ten PAO strategies ranked by test Sharpe$_a$. Relative to PTO, the strongest PAO configurations can improve Sharpe but exhibit objective-dependent mean/volatility and drawdown trade-offs. 
Two regularities stand out. First, the best-performing PAO pipelines (by Sharpe$_a$) are obtained under the utility training objective with $\boldsymbol{\Omega}=\mathrm{diag}(\boldsymbol{\Sigma})$ and $\kappa=1.0$ (Sharpe$_a$ of 1.291 and 1.242 for $\lambda=5$ and $\lambda=10$, respectively). Second, within the top set, robustness is not incidental: $\kappa=1.0$ appears in the majority of the top-10 entries, suggesting that explicit mean-uncertainty penalization plays a central stabilizing role. At the same time, objective choice materially affects tail behavior. For example, the strongest return-trained configuration in Table~\ref{tab:pao_top10} achieves a high Sharpe$_a$ (1.177) but exhibits the largest MaxDD among the top set ($-0.401$), illustrating that maximizing average return can induce more adverse drawdown profiles even when average risk-adjusted performance remains competitive.

\begin{table}[htbp]
\centering
\caption{Top 10 PAO decision pipelines ranked by annualized Sharpe, , January 2016–November 2024}
\label{tab:pao_top10}
\small
\begin{tabular}{llrrcccc}
\toprule
Loss & $\boldsymbol{\Omega}$ & $\lambda$ & $\kappa$ & Sharpe$_a$ & Mean$_a$ & Vol$_a$ & MaxDD \\
\midrule
utility & $\mathrm{diag}(\boldsymbol{\Sigma})$ &  5  & 1.0 & 1.291 & 0.282 & 0.219 & $-$0.295 \\
utility & $\mathrm{diag}(\boldsymbol{\Sigma})$ & 10  & 1.0 & 1.242 & 0.329 & 0.265 & $-$0.273 \\
return  & $\mathrm{diag}(\boldsymbol{\Sigma})$ &  5  & 1.0 & 1.177 & 0.254 & 0.216 & $-$0.401 \\
sharpe  & $\mathbf{I}$                      & 20  & 1.0 & 1.168 & 0.204 & 0.175 & $-$0.295 \\
utility & $\mathbf{I}$                      & 20  & 1.0 & 1.106 & 0.197 & 0.178 & $-$0.311 \\
utility & $\mathbf{I}$                      & 10  & 1.0 & 1.091 & 0.209 & 0.191 & $-$0.309 \\
sharpe  & $\mathbf{I}$                      & 10  & 1.0 & 1.084 & 0.198 & 0.183 & $-$0.315 \\
utility & $\mathbf{I}$                      & 20  & 0.1 & 1.081 & 0.245 & 0.227 & $-$0.330 \\
sharpe  & $\mathbf{I}$                      & 20  & 0.1 & 1.081 & 0.242 & 0.224 & $-$0.277 \\
sharpe  & $\mathbf{I}$                      &  5  & 1.0 & 1.025 & 0.191 & 0.186 & $-$0.326 \\
\bottomrule
\end{tabular}
\end{table}

\begin{figure}[H]
  \centering
  \includegraphics[width=\linewidth]{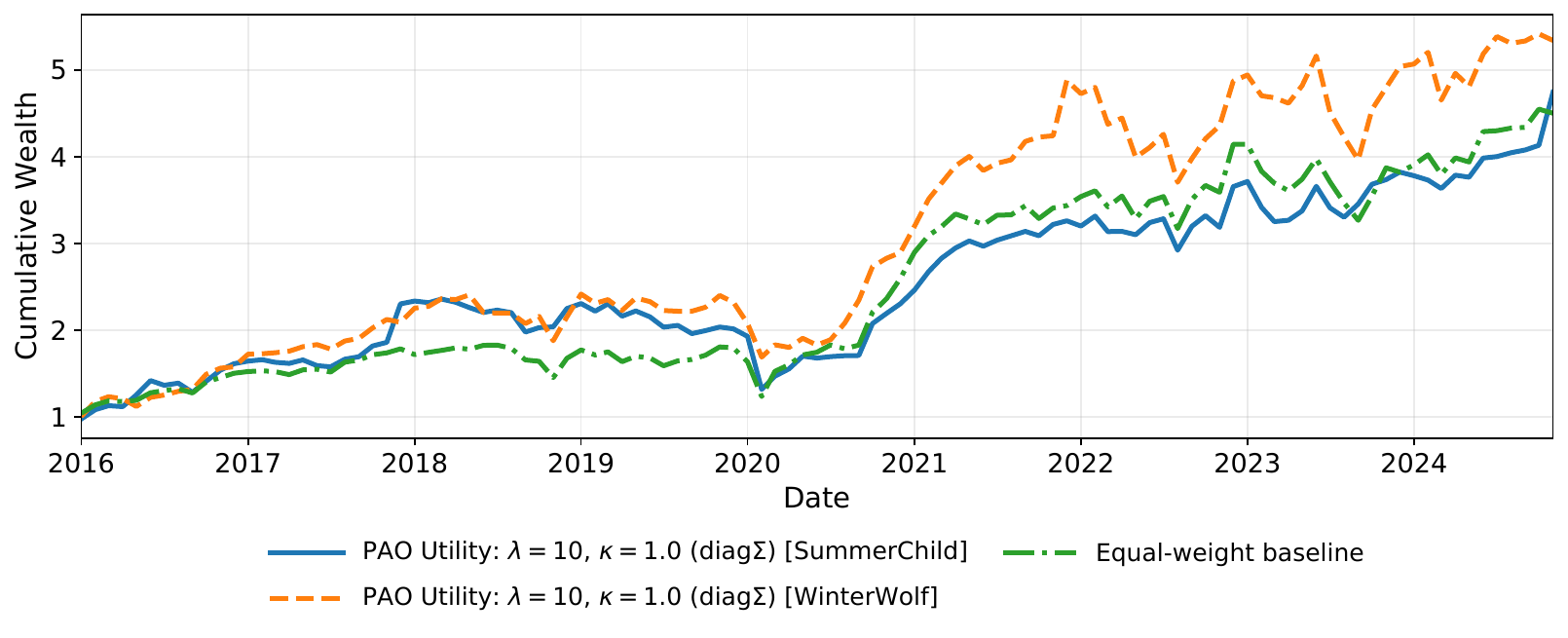}
  \caption{Cumulative wealth over the test period for the equal-weight benchmark and two PAO pipelines trained with the utility objective: the ``summer-child'' and ``winter-wolf'' models.}
  \label{fig:app_wealth_summer_winter}
\end{figure}

\begin{figure}[H]
  \centering
  \includegraphics[width=\linewidth]{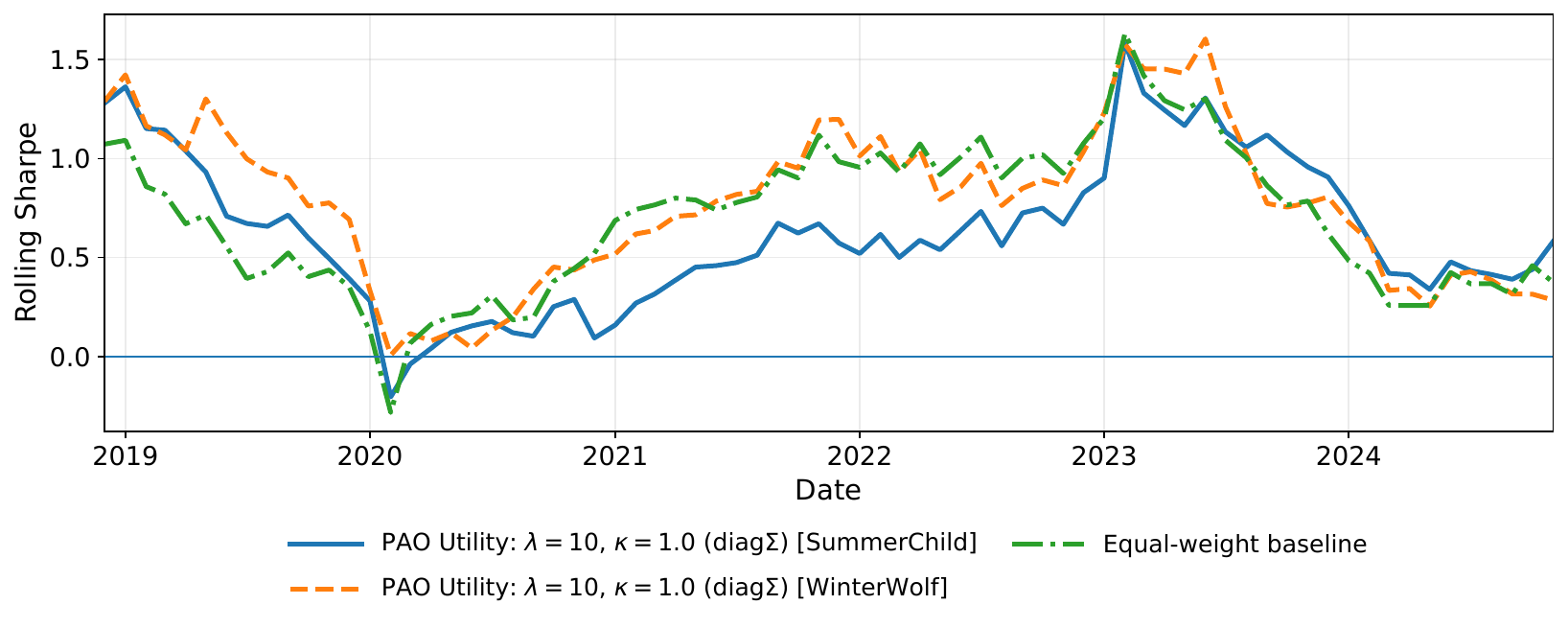}
  \caption{Rolling Sharpe ratios (annualized; 36-month window) computed for the equal-weight benchmark and two PAO utility-trained pipelines.}
  \label{fig:app_sharpe_summer_winter}
\end{figure}

\clearpage

\section{Economic Interpretation: Diagnostic Tools}
\label{app:econ_interp}

To read the generated macroeconomic scenarios in economic terms, we use three
auxiliary tools. We emphasize that these are \emph{post-hoc interpretive
diagnostics}: none of them enters the probing function $G$ or the plausibility
prior $p_0$, so they do not affect which scenarios are generated, only how we
describe them.

\paragraph{Regime classifier.}
We label each historical month as \emph{expansion}, \emph{contraction}, or
\emph{financial stress} using a rule grounded in standard business-cycle and
financial-conditions indicators: a month is labeled financial stress when the
Chicago Fed National Financial Conditions Index is above its historical average;
contraction when an NBER recession is in progress, the Sahm-rule unemployment
trigger fires, or activity is weak with labor-market slack; and expansion
otherwise. We then fit a multinomial logistic classifier from the nine
standardized macroeconomic predictors to these labels; it attains an out-of-sample
classification accuracy of about $95\%$. When we report the regime of a generated
scenario, we apply this classifier to the generated state.

\paragraph{Nearest historical analogues.}
To locate a generated scenario relative to history, we report its nearest
historical month(s) under a Mahalanobis distance in standardized macroeconomic
space. This supports statements such as ``the generated region most closely
resembles [month/year].''

\clearpage

\section{Convergence of the MALA Algorithm.}
\label{app:convergenceMALA}

 We refer the reader to \citep{roberts2004general} for an accessible survey of MCMC algorithms and their convergence guarantees. MALA algorithm converges (in law) to the target distribution under standard conditions: irreducibility, aperiodicity, and invariance of the target density. In particular, if the Markov chain satisfies the detailed balance condition with respect to the target distribution $p$ in \eqref{eq:GibbsforG}, then this distribution is stationary for the chain, and (under irreducibility and aperiodicity) it is the unique limiting distribution.

The Metropolis Hastings acceptance-rejection probability $\alpha$ in MALA is designed precisely to ensure that MALA updates satisfy the stationarity condition for the distribution $p$; see Propositions 1 and 2 in \citep{roberts2004general}. The idea behind detailed balance is intuitive: if $T(\vec{m}\to \vec{m}')$ denotes the one-step transition probability of the MALA algorithm, then it must satisfy
\begin{equation} \label{eq:detailedBalance}
    T(\vec{m} \to \vec{m}') p(\vec{m}) = T(\vec{m}' \to \vec{m}) p(\vec{m'}).
\end{equation}
Intuitively, this condition balances probability flow between any pair of states, so that the target density is preserved by the dynamics. In MALA, the transition kernel can be written as $T(\vec{m} \to \vec{m'}) = \Gamma(\vec{m} \to \vec{m'}) \alpha$, where $\Gamma$ is the proposal density and $\alpha = \min\{1, \frac{p(\vec{m}') \Gamma(\vec{m}'\to \vec{m})}{p(\vec{m})\Gamma(\vec{m} \to \vec{m}')}\}$. Let the reciprocal acceptance probability be $\alpha' = \min\{1, \frac{p(\vec{m})\Gamma(\vec{m} \to \vec{m}')}{p(\vec{m}') \Gamma(\vec{m}'\to \vec{m})}\}$. Therefore, if the minimum is achieved with the ratio, then $\alpha = 1$, and we have
\begin{align*}
 T(\vec{m} \to& \vec{m}') p(\vec{m}) =  \Gamma(\vec{m} \to \vec{m'}) \alpha p(\vec{m}) \\
 &=  \Gamma(\vec{m} \to \vec{m'})\frac{p(\vec{m}') \Gamma(\vec{m}'\to \vec{m})}{p(\vec{m})\Gamma(\vec{m} \to \vec{m}')} p(\vec{m})\\
 &= p(\vec{m'}) \Gamma(\vec{m}' \to \vec{m}) \\
 &= p(\vec{m}')T(\vec{m}' \to \vec{m}),
\end{align*}
which is exactly \eqref{eq:detailedBalance}. If instead $\alpha = 1$, the argument is symmetric between the right hand side and left hand side. Thus the chain satisfies detailed balance with respect to $p$, implying that $p$ is stationary. Under irreducibility and aperiodicity, this stationary distribution is also the limiting distribution.

The irreducibility condition \cite[p. 31]{roberts2004general} states that, starting from any point $\vec{m}$, the chain can reach a set $A \subset \mathcal{M}$ with positive measure in finitely many steps with positive probability. In our setting, this holds because the MALA proposal includes Gaussian noise with full support, so any such set $A$ can be reacged with nonzero probability in one step.

Aperiodicitiy is a disjoint decomposition $\mathcal{M}_1, \ldots, \mathcal{M}_d \subseteq \mathcal{M}$ such that given $\vec{m}\in \vec{M}_i$, the next step of the chain falls inside $\mathcal{M}_{i + 1}$ (with cyclical modulo $d$ indexing if $i = d$) with probability $1.0$ \cite[p. 31]{roberts2004general}. For MALA, aperiodicity holds because proposals are accepted with probability strictly less than one in general, and rejections imply a positive probability of remaining at the current state, preventing periodic behavior.

Given the conditions of aperiodicity and irreducibility, and stationarity of distribution $p$, then for almost all initial points $\vec{m}_0$, the probability of the chain reaching a measurable subset $A$ at $n$ steps (as $n \to \infty$) is $p(A)$.

For explicit convergence rates (e.g., in Wasserstein distance), see \citep{durmus2022geometric}, which relates geometric convergence to regularity conditions on $G$ (such as Lipschitz continuity) and suitable tail behavior (requiring sufficiently light, super-exponentially decaying tails); see \citep{roberts1996exponential}.

\end{document}